\documentclass[12pt]{amsart}
\usepackage{hyperref}
\usepackage[utf8]{inputenc}
\usepackage{amssymb, graphicx}
\usepackage[notref,notcite]{showkeys}
\usepackage{amsfonts}
\usepackage{amsmath}
\usepackage{latexsym}
\usepackage{amscd}
\usepackage{verbatim}
\usepackage{mathrsfs}

\allowdisplaybreaks[4]

\newcommand{\mf}{\mathfrak}
\newcommand{\g}{\mf{g}}

\newcommand{\gl}{\mf{gl}}

\addtolength{\textwidth}{3cm} \addtolength{\oddsidemargin}{-1.5cm}
\addtolength{\evensidemargin}{-1.5cm}
\allowdisplaybreaks[4]

\newcommand{\Z}{{\mathbb Z}}
\newcommand{\C}{{\mathbb C}}
\newcommand{\N}{{\mathbb N}}
\newcommand{\Q}{{\mathbb Q}}

\newcommand{\K}{{\mathcal K}}

\newcommand{\supp}{{\operatorname{Supp}}\xspace}

\renewcommand{\phi}{\varphi}

\def\sl{\mathfrak{sl}}
\def\gl{\mathfrak{gl}}

\def\l{\lambda}

 \def\ann{\rm ann}
\def\sl{\mathfrak{sl}}

\newtheorem{theorem}{Theorem}[section]

\newtheorem{lemma}[theorem]{Lemma}

\theoremstyle{remark}

\numberwithin{equation}{section}

\def\span{\mathrm{span}}
\def\supp{\mathrm{Supp}}
\def\vn{\varnothing}
\def\m{\mathfrak{m}}

\newcommand\pxi[1]{\frac{\partial}{\partial\xi_{#1}}}

%%% BEGIN DOCUMENT
\begin{document}

%\begin{frontmatter}

%% Title, authors and addresses

%% use the tnoteref command within \title for footnotes;
%% use the tnotetext command for theassociated footnote;
%% use the fnref command within \author or \address for footnotes;
%% use the fntext command for theassociated footnote;
%% use the corref command within \author for corresponding author footnotes;
%% use the cortext command for theassociated footnote;
%% use the ead command for the email address,
%% and the form \ead[url] for the home page:
%% \title{Title\tnoteref{label1}}
%% \tnotetext[label1]{}
%% \author{Name\corref{cor1}\fnref{label2}}
%% \ead{email address}
%% \ead[url]{home page}
%% \fntext[label2]{}
%% \cortext[cor1]{}
%% \address{Address\fnref{label3}}
%% \fntext[label3]{}

\title[]{bounded weight modules over the Lie superalgebra of Cartan W-type}
\author{Rencai L\"{u},  Yaohui Xue}
\maketitle

\begin{abstract}
  Let $A_{m,n}$ be the tensor product of the polynomial algebra in $m$ even variables and the exterior algebra in $n$ odd variables over the complex field $\C$, and the Witt superalgebra $W_{m,n}$ be the Lie superalgebra of superderivations of $A_{m,n}$. In this paper, we classify the non-trivial simple bounded weight $W_{m,n}$ modules with respect to the standard Cartan algebra of $W_{m,n}$. Any such module is a simple quotient of a tensor module $F(P,L(V_1\otimes V_2))$ for a simple weight module $P$ over the Weyl superalgebra $\mathcal K_{m,n}$, a finite-dimensional simple $\gl_m$-module $V_1$ and a simple bounded $\gl_n$-module $V_2$.
\end{abstract}

%% use optional labels to link authors explicitly to addresses:
%% \author[label1,label2]{}
%% \address[label1]{}
%% \address[label2]{}

%\end{frontmatter}

%\linenumbers

\section{Introduction}

We denote by $\Z, \Z_+, \N, \Q$ and $\C$ the sets of all integers, non-negative integers, positive integers, rational numbers and complex numbers, respectively. All vector spaces and algebras in this paper are over $\C$. Any module over a Lie superalgebra or an associative superalgebra is assumed to be $\Z_2$-graded. Let $0\neq (m,n)\in\Z_+^2$ and let $e_1,\dots,e_{m+n}$ be the standard basis of $\C^{m+n}$.

Let $A_{m,n}$ ( resp. $\mathcal A_{m,n}$ ) be the tensor superalgebra of the polynomial algebra $\C[t_1,\dots,t_m]$ ( resp. Laurent polynomial algebra $\C[t_1^{\pm 1},\dots,t_m^{\pm 1}]$ ) in $m$ even variables $t_1,\dots,t_m$ and the exterior algebra $\Lambda(n)$ in $n$ odd variables $\xi_1,\dots,\xi_n$. Denote by $W_{m,n}$ ( resp. $\mathcal W_{m,n}$ ) the Lie superalgebra of super-derivations of $A_{m,n}$ ( resp. $\mathcal A_{m,n}$ ). Cartan $W$-type Lie superalgebra $W_{m,n}$ was introduced by V. Kac in \cite{K}.

Weight modules with finite-dimensional weight spaces are called Harish-Chandra modules. Many efforts have been made towards the classification of Harish-Chandra modules over various Lie (super)algebras. For finite-dimensional simple Lie algebras, O. Mathieu classified all the simple Harish-Chandra modules in \cite{Ma1}. M. Gorelik and D. Grantcharov completed the classification of all simple Harish-Chandra modules over all classical Lie superalgebras in \cite{GG1}, following the works in \cite{DMP,FGG,GG, Gr,Ho}. Such modules over the Virasoro algebra (which is the universal central extension of $\mathcal W_{1,0}$) were conjectured by V. Kac and classified by O. Mathieu in \cite{Ma}. Y. Billig and V. Futorny completed the classification for $\mathcal W_{m,0}$ in \cite{BF1}. Simple Harish-Chandra modules over $W_{0,n}$ were classified in \cite{DMP}. The simple weight modules with finite-dimensional weight spaces with respect to the Cartan subalgebra of $\mathcal W_{1,0}$ over the $N=2$ Ramond algebra (which is a central extension of $\mathcal W_{1,1}$) were classified in \cite{Liu1}. Such modules over $\mathcal W_{m,n}$ were classified in \cite{XL2}, see also \cite{BFIK}.
For more related results, we refer the readers to \cite{ BF2, BL,CLL, E1, E2, LZ2, MZ, Sh, Su1, Su2} and the references therein.

As we know, the classification of simple bounded weight modules is not only  is an important step in the classification of simple Harish-Chandra modules but also interesting on its own.  Simple bounded weight modules over $W_{1,0}$ were classified in \cite{Ma} and simple bounded modules over $W_{2,0}$ were classified in \cite{CG}. Such modules over $W_{m,0}$ were classified in \cite{XL1}. In this paper, we classify the non-trivial simple bounded weight $W_{m,n}$ modules with respect to the standard Cartan algebra of $W_{m,n}$. Any such module is a simple quotient of a tensor module $F(P,L(V_1\otimes V_2))$ for a simple weight module $P$ over the Weyl superalgebra $\mathcal K_{m,n}$, a finite-dimensional simple $\gl_m$-module $V_1$ and a simple bounded $\gl_n$-module $V_2$. This is achieved by widely using the results and methods in \cite{XL1, XL2}.

\iffalse
The simple bounded modules over finite-dimensional simple Lie algebras were also classified by O. Mathieu in \cite{Ma1}.
Simple bounded modules over $\sl(\infty),\frak{o}(\infty)$ and $\frak{sp}(\infty)$ were classified in \cite{GP}.
Simple bounded highest weight modules over $\frak q(n)$ were classified in \cite{GG}, which leads to a classification of all simple Harish-Chandra $\frak q(n)$-modules. Simple bounded highest weight modules over $\frak{osp}(1,2n)$ were classified in \cite{FGG}.
Simple bounded modules over $W_{1,0}$ were classified by O. Mathieu in \cite{Ma}. The simple bounded modules over $W_{2,0}$ were classified in \cite{CG}. Such modules over $W_{m,0}$ were classified in \cite{XL1}.
\fi

This paper is arranged as follows. In Section 2, we give some definitions and preliminaries. In Section 3, we prove that a simple weight $AW$-module with a finite-dimensional weight space is a tensor module, see Lemma \ref{cusF(P,M)} and Theorem \ref{F(P,M)}. In Section 4, we prove our main theorem, see Theorem \ref{main}.

\section{Preliminaries}

A vector space $V$ is called a superspace if $V$ is endowed with a $\Z_2$-gradation $V=V_{\bar 0}\oplus V_{\bar 1}$. For any homogeneous element $v\in V$, let $|v|\in\Z_2$ with $v\in V_{|v|}$. Throughout this paper, $v$ is always assumed to be a homogeneous element whenever we write $|v|$ for a vector $v\in V$.

\iffalse Any subalgebra (resp. ideal) of a Lie superalgebra or an associative superalgebra is assumed to be $\Z_2$-graded. \fi

A module over a Lie superalgebra or an associative superalgebra is simple if it does not have nontrivial $\Z_2$-graded submodules. A module $M$ over a Lie superalgebra or an associative superalgebra $\g$ is called strictly simple if $M$ does not have $\g$-invariant subspaces except $0$ and $M$. Clearly, a strictly simple module must be simple. Denote by $\Pi(M)$ the parity-change of $M$ for a module $M$ over a Lie superalgebra or an associative superalgebra.

\begin{lemma}\cite[Lemma 2.2]{XL2}\label{density}
Let $B,B'$ be two unital associative superalgebras such that $B'$ has a countable basis, $R=B\otimes B'$. Then
\begin{itemize}
  \item[(1)]Let $M$ be a $B$-module and $M'$ be a strictly simple $B'$-module. Then $M\otimes M'$ is a simple $R$-module if and only if $M$ is simple.
  \item[(2)]Suppose that $V$ is a simple $R$-module and $V$ contains a strictly simple $B'=\C\otimes B'$-submodule $M'$. Then $V\cong M\otimes M'$ for some simple $B$-module $M$.
\end{itemize}
\end{lemma}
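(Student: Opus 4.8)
The plan is to prove Lemma~\ref{density} as the super-analogue of the classical Jacobson density argument for modules over a tensor product of associative algebras. The only external inputs needed are Schur's lemma, Dixmier's lemma (a division algebra of at most countable dimension over an algebraically closed field is the field itself), and the Jacobson density theorem; everything else is sign bookkeeping in the $\Z_2$-graded setting.

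The first step is the key algebraic fact that $\mathrm{End}_{B'}(M')=\C\,\mathrm{id}$. Since $M'$ is strictly simple, any nonzero $m'\in M'$ generates $M'$ over $B'$, so $M'$ is cyclic and $\dim_\C M'\leq\dim_\C B'$ is at most countable; hence $\mathrm{End}_{B'}(M')$ embeds into $M'$ via $\phi\mapsto\phi(m')$ and is a division algebra (Schur) of at most countable dimension over $\C$, so it equals $\C\,\mathrm{id}$ by Dixmier's lemma. The Jacobson density theorem then gives that the image of $B'$ in $\mathrm{End}_\C(M')$ is dense; concretely, for $\C$-linearly independent $m_1',\dots,m_k'\in M'$ and arbitrary $v_1,\dots,v_k\in M'$ there is $b'\in B'$ with $b'm_i'=v_i$ for all $i$. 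When the $m_i'$ and $v_i$ are homogeneous, I would then replace $b'$ by its appropriate $\Z_2$-homogeneous component $b'_{\bar0}$ or $b'_{\bar1}$, which still realizes the same assignment on those homogeneous vectors, because $b'_{\bar0}m_i'$ and $b'_{\bar1}m_i'$ lie in different $\Z_2$-components while their sum is the homogeneous vector $v_i$. This step — making sure the required separations of homogeneous vectors can be carried out by homogeneous elements of $B'$, with all Koszul signs tracked — is the main technical nuisance of the whole argument.

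For part~(1), the implication ``$M\otimes M'$ simple $\Rightarrow M$ simple'' is immediate: a nonzero proper graded $B$-submodule $N\subsetneq M$ produces the nonzero proper graded $R$-submodule $N\otimes M'$. Conversely, assume $M$ simple. Since every nonzero graded $R$-submodule of $M\otimes M'$ contains a nonzero homogeneous vector, it suffices to show that any nonzero homogeneous $w$ generates $M\otimes M'$. Write $w=\sum_{i=1}^k m_i\otimes m_i'$ with all $m_i,m_i'$ homogeneous and $k$ minimal; minimality forces the $m_i$ and the $m_i'$ to be linearly independent. Using the density step, choose homogeneous $b'\in B'$ with $b'm_i'=\delta_{i1}m_1'$; then the $R$-submodule $\langle w\rangle$ contains $(1\otimes b')w=\pm\,m_1\otimes m_1'$. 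Acting by $B\otimes 1$ and using $Bm_1=M$ yields $M\otimes m_1'\subseteq\langle w\rangle$, and then acting by $1\otimes B'$ and using $B'm_1'=M'$ (strict simplicity) yields $M\otimes M'\subseteq\langle w\rangle$, as needed.

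For part~(2), set $M:=\mathrm{Hom}_{B'}(M',V)$, the space of all $\Z_2$-homogeneous $B'$-supermodule morphisms $M'\to V$, made into a $B$-supermodule by $(b\cdot f)(m')=(b\otimes 1)f(m')$; one checks, with the convention $(b\otimes b')(v)=(-1)^{|b'||v|}(b\otimes1)(1\otimes b')v$, that $b\cdot f$ is again $B'$-linear and that the $B$-action is super-linear. The evaluation map $\mu\colon M\otimes M'\to V$, $f\otimes m'\mapsto f(m')$ up to a fixed sign, is $R$-linear and nonzero since $\mathrm{id}_{M'}\in M$ and $M'\subseteq V$, hence surjective because $V$ is simple. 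For injectivity, suppose $\sum_{i=1}^k f_i\otimes m_i'\in\ker\mu$ with $k$ minimal and nonzero, so the $f_i$ and the $m_i'$ are linearly independent; applying $1\otimes b'$ to the relation $\sum_i f_i(m_i')=0$ and using $B'$-linearity of the $f_i$ gives $\sum_i(-1)^{|b'||f_i|}f_i(b'm_i')=0$ for every homogeneous $b'$, and choosing, for each homogeneous $v\in M'$, a homogeneous $b'$ with $b'm_1'=v$, $b'm_i'=0$ for $i\geq2$, forces $f_1(v)=0$, hence $f_1=0$, a contradiction. Thus $\mu$ is an isomorphism, $V\cong M\otimes M'$, and since $V$ is simple, part~(1) shows $M$ is a simple $B$-module. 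As indicated, the expected main obstacle throughout is the consistent handling of the super-signs — in particular, extracting homogeneous $b'\in B'$ realizing the density separations and verifying that $\mu$ and the $B$-action on $\mathrm{Hom}_{B'}(M',V)$ are genuinely morphisms of supermodules.
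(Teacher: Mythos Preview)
The paper does not prove this lemma; it is simply quoted from \cite{XL2} (their Lemma~2.2) without argument. Your proof via Schur--Dixmier and the Jacobson density theorem is correct and is the standard route for such tensor-product statements; the sign bookkeeping you flag as the main nuisance is handled correctly in your sketch (in particular, replacing the density element $b'$ by its appropriate homogeneous component works because the targets $\delta_{i1}m_1'$ and $0$ are homogeneous, so the two parity components of each $b'm_i'$ separate cleanly).
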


Write $A:=A_{m,n}$, $W:=W_{m,n}$ and omit $\otimes$ in $A$ for convenience.

For any $\alpha=(\alpha_1,\dots,\alpha_m)\in\Z_+^m$ and $i_1,\dots,i_k\in\{1,\dots,n\}$, write $t^\alpha:=t_1^{\alpha_1}\cdots t_m^{\alpha_m}$ and $\xi_{i_1,\dots,i_k}:=\xi_{i_1}\cdots \xi_{i_k}$. Also, for any subset $I=\{i_1,\dots,i_k\} \subset \{1,\dots,n\}$, write $\underline{I}=(l_1,\dots,l_k)$ if $\{l_1,\dots,l_k\}=\{i_1,\dots,i_k\}$ and $l_1<\dots<l_k$. Denote $\xi_I:=\xi_{l_1,\dots,l_k}$ and set $\xi_\varnothing=1$.

Let $i_1,\dots,i_k$ be a sequence in $\{1,\dots,n\}$. Denote by $\tau(i_1,\dots,i_k)$ the inverse order of the sequence $i_1,\dots,i_k$. Let $I,J\subset \{1,\dots,n\}$ with $I\cap J=\varnothing$ and $\underline{I}=(k_1,\dots,k_p),\underline J=(l_1,\dots,l_q)$. We write $\tau(I,J)=(k_1,\dots,k_p,l_1,\dots,l_q)$. Set $\tau(\varnothing,\varnothing)=0$. Then $\xi_{I\cup J}=(-1)^{\tau(I,J)}\xi_I\xi_J$ for all $I\cap J=\varnothing$.

$W$ has a standard basis
$$\{t^\alpha\xi_I\frac{\partial}{\partial t_i},t^\alpha\xi_I\frac{\partial}{\partial\xi_j}\ |\ \alpha\in\Z_+^m,I\subset\{1,\dots,n\},i\in\{1,\dots,m\},j\in\{1,\dots,n\}\}.$$

Define the extended Witt superalgebra $\tilde{W}=W\ltimes A$ by
\begin{equation}
[a,a']=0,[x,a]=-(-1)^{|x||a|}[a,x]=x(a),\forall x\in W,a,a'\in A.
\end{equation}

Write $d_i:=t_i\frac{\partial}{\partial t_i}$ for any $i\in\{1,\dots,m\}$ and $\delta_j:=\xi_j\frac{\partial}{\partial\xi_j}$ for any $j\in\{1,\dots,n\}$. Then $H=\span \{d_i,\delta_j\ |\ i=1,\dots,m,j=1,\dots,n\}$ is the standard Cartan subalgebra of $W$. Let $\g$ be any Lie super-subalgebra of $\tilde W$ that contains $H$ and let $M$ be a $\g$-module. $M$ is called a weight module if the action of $H$ on $M$ is diagonalizable. Namely, $M$ is a weight module if $M=\oplus_{\lambda\in\C^m,\mu\in\C^n}M_{(\lambda,\mu)}$, where
$$M_{(\lambda,\mu)}=\{v\in M\ |\ d_i(v)=\lambda_iv,\delta_j(v)=\mu_jv,i\in\{1,\dots,m\},j\in\{1,\dots,n\}\}$$
is called the weight space with weight $(\lambda,\mu)$.
Denote by
$$\supp(M)=\{(\lambda,\mu)\in\C^{m+n}\ |\ M_{(\lambda,\mu)}\neq 0\}$$
the support set of $M$. A weight $\g$-module is called if the dimensions of its weight spaces are uniformly bounded by a constant positive integer.

It's easy to see that $\tilde{W}$ (resp. $W$) itself is a weight module over $\tilde{W}$ (resp. $W$) with $\supp(\tilde{W})$ (resp. $W$) $\subset \Z^{m+n}$. So for any indecomposable weight module $M$ over $\tilde{W}$ or $W$ we have $\supp(M)\in(\lambda,\mu)+\Z^{m+n}$ with some $(\lambda,\mu)\in\C^{m+n}$.

Let $\gl(m,n)=\gl(\C^{m|n})$ be the general linear Lie superalgebra realized as the spaces of all $(m+n)\times(m+n)$ matrices. Denote by $E_{i,j}, i,j=1,2,\ldots,m+n$ be the $(i,j)$-th matrix unit.
%\begin{eqnarray*}
%&\gl(m,n)_{\bar 0}=\span\{E_{i,j}\ |\ i,j\in\{1,\dots,m\}\ or\ i,j\in\{m+1,\dots,m+n\}\},\\
%&\gl(m,n)_{\bar 1}=\span\{E_{i,m+j},E_{m+j,i}\ |\ i\in\{1,\dots,m\},j\in\{1,\dots,n\}\}.
%\end{eqnarray*}
%Set Lie algebra $\gl_0=0$ and Lie superalgebras $\gl(0,n)=\gl_n,\gl(m,0)=\gl_m$.
$\gl(m,n)$ has a $\Z$-gradation $\gl(m,n)=\gl(m,n)_{-1}\oplus\gl(m,n)_0\oplus\gl(m,n)_1$, where
\begin{eqnarray*}
&\gl(m,n)_{-1}=\span\{E_{m+j,i}\ |\ i\in\{1,\dots,m\},j\in\{1,\dots,n\}\},\\
&\gl(m,n)_1=\span\{E_{i,m+j}\ |\ i\in\{1,\dots,m\},j\in\{1,\dots,n\}\}
\end{eqnarray*}
and $\gl(m,n)_0=\gl(m,n)_{\bar 0}$. Obviously, this $\Z$-gradation is consistent with the $\Z_2$-gradation of $\gl(m,n)$.

A $\gl(m,n)$-module $M$ is a weight module if $M=\oplus_{\lambda\in\C^m,\mu\in\C^n}M_{(\lambda,\mu)}$, where
$$M_{(\lambda,\mu)}=\{v\in M\ |\ E_{i,i}(v)=\lambda_iv,E_{m+j,m+j}(v)=\mu_jv,i\in\{1,\dots,m\},j\in\{1,\dots,n\}\}$$
is called the weight space with weight $(\lambda,\mu)$.
Denote by
$$\supp(M)=\{(\lambda,\mu)\in\C^{m+n}\ |\ M_{(\lambda,\mu)}\neq 0\}$$
the support set of $M$.
A weight $\gl(m,n)$-module is called if the dimensions of its weight spaces are uniformly bounded by a constant positive integer.

For any module V over the Lie algebra $\gl(m,n)_0$, $V$ could be viewed as modules over the Lie superalgebra $\gl(m,n)_0$ with $V_{\bar 0}=V$.
Let $V$ be a $\gl(m,n)_0$-module and extend $V$ trivially to a $\gl(m,n)_0\oplus\gl(m,n)_1$-module.
The \emph{Kac module} of $V$ is the induced module $K(V):=\rm {Ind}_{\gl(m,n)_0\oplus\gl(m,n)_1}^{\gl(m,n)}(V)$. It's easy to see that $K(V)$ is isomorphic to $\Lambda(\gl(m,n)_{-1})\otimes V$ as superspaces.

\begin{lemma}\cite[Theorem 4.1]{CM}\label{L(V)}
For any simple $\gl(m,n)_0$-module $V$, the module $K(V)$ has a unique maximal submodule. The unique simple top of $K(V)$ is denoted $L(V)$. Any simple $\gl(m,n)$-module is isomorphic to $L(V)$ for some simple $\gl(m,n)_0$-module $V$ up to a parity-change.
\end{lemma}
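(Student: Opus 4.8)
The plan is to exploit the $\Z$-grading $\g=\g_{-1}\oplus\g_0\oplus\g_1$ of $\g:=\gl(m,n)$ together with the observation that $\p:=\g_0\oplus\g_1$ is a subalgebra: indeed $[\g_0,\g_1]\subseteq\g_1$ and $[\g_1,\g_1]=0$ since $\g_1$ is an odd abelian subspace, and $K(V)=\operatorname{Ind}_{\p}^{\g}V$ with $\g_1$ acting as $0$ on $V$. The crucial device is the element $z:=\sum_{i=1}^{m}E_{i,i}\in\g_0$, which is central in $\g_0=\gl(m,n)_{\bar 0}\cong\gl_m\oplus\gl_n$ and satisfies $[z,x]=kx$ for all $x\in\g_k$. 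Since $\g_0$ is finite-dimensional, Schur's lemma gives a scalar $c\in\C$ with $z$ acting as $c$ on any prescribed simple $\g_0$-module $V$.

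For the first two assertions I would argue as follows. By PBW, $K(V)=\bigoplus_{k=0}^{mn}\Lambda^{k}(\g_{-1})\otimes V$, and a short computation shows that $z$ acts on the $k$-th summand by the scalar $c-k$; as these $mn+1$ scalars are pairwise distinct, this is exactly the decomposition of $K(V)$ into $z$-eigenspaces, so every $\g$-submodule $N$, being $z$-stable, is graded: $N=\bigoplus_{k}\bigl(N\cap(\Lambda^{k}(\g_{-1})\otimes V)\bigr)$. Next, $1\otimes V\cong V$ generates $K(V)$ over $\g$, because $\g_1$ annihilates it and $\g_0$ preserves it, whence $U(\g)(1\otimes V)=U(\g_{-1})(1\otimes V)=K(V)$. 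If $N\subsetneq K(V)$ is proper, then $N\cap(1\otimes V)$ is a $\g_0$-submodule of the simple module $V$ and cannot equal $V$ (otherwise $N$ would contain a generating set), so it is $0$, i.e.\ the degree-$0$ component of $N$ vanishes. Hence the sum $N_{\max}$ of all proper submodules is again graded with vanishing degree-$0$ component, so it is still proper, and it is by construction the unique maximal submodule; I set $L(V):=K(V)/N_{\max}$, the unique simple top. The same bookkeeping yields $L(V)^{\g_1}=V$: a nonzero $z$-homogeneous vector of $z$-degree $c-k$ with $k\geq 1$ that were killed by $\g_1$ would generate a $\g$-submodule supported in $z$-degrees $\leq c-k<c$, hence not all of $L(V)$, since the degree-$c$ part of $L(V)$ is nonzero ($1\otimes V$ meeting $N_{\max}$ trivially).

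For the converse, let $S$ be a simple $\g$-module. Because $\g_1$ is finite-dimensional, odd and abelian, $U(\g_1)=\Lambda(\g_1)$ has nilpotent augmentation ideal; for any $0\neq s_0\in S$ the space $U(\g_1)s_0$ is finite-dimensional and the augmentation ideal acts on it nilpotently, so by an Engel-type argument it contains a nonzero vector killed by $\g_1$. Thus $T:=\{s\in S\mid\g_1 s=0\}\neq 0$, and $T$ is a $\g_0$-submodule of $S$ since $[\g_0,\g_1]\subseteq\g_1$. Granting that $T$ is a simple $\g_0$-module, I take $V:=T$; Frobenius reciprocity then gives $\operatorname{Hom}_{\g}(K(V),S)\cong\operatorname{Hom}_{\g_0}(V,T)\neq 0$, and the resulting $\g$-homomorphism $K(V)\to S$ sends $1\otimes v\mapsto v$, hence has image $U(\g_{-1})V=U(\g)V=S$ and is surjective. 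As $S$ is simple it is a simple quotient of $K(V)$, which by the first part forces $S\cong L(V)$; the parity-change in the statement absorbs the $\Z_2$-grading ambiguity.

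The step I expect to be the main obstacle is precisely the one invoked just now: that $T=S^{\g_1}$ is a \emph{simple} $\g_0$-module (equivalently, in view of $L(V)^{\g_1}=V$, that the $V$ realizing $S$ is forced to be $S^{\g_1}$). In the bounded weight setting of this paper $z$ acts semisimply on $S$, so $T$ splits into $z$-eigenspaces; combining this with $U(\g_{-1})T=S$ and the generation/gradedness facts above, one shows that any nonzero $\g_0$-submodule of $T$ already equals $T$, so one may indeed take $V=T$. The simplicity of $S^{\g_1}$ in general is the content of \cite[Theorem 4.1]{CM}, which we shall invoke.
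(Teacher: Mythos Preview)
The paper does not supply its own proof of this lemma; it is stated purely as a citation of \cite[Theorem~4.1]{CM}. So there is no in-paper argument to compare yours against.

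That said, your proof of the existence of a unique maximal submodule of $K(V)$ is correct and is the standard one: the grading element $z=\sum_{i\leq m}E_{i,i}$ acts by the scalar $c-k$ on $\Lambda^k(\g_{-1})\otimes V$ (Dixmier's form of Schur's lemma giving the scalar $c$ on $V$), so every submodule is graded and, if proper, misses the generating top piece $1\otimes V$. Your deduction that $L(V)^{\g_1}=V$ is likewise correct.

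For the converse you rightly isolate the nontrivial step---that $T=S^{\g_1}$ is a simple $\g_0$-module (or at least contains one)---and ultimately defer to \cite{CM}, which is precisely what the paper itself does. One remark on your weight-case sketch: the assertion ``any nonzero $\g_0$-submodule of $T$ already equals $T$'' is exactly the simplicity of $T$, so it cannot serve as an intermediate hypothesis. A cleaner route, once $z$ acts semisimply (automatic in the weight setting since $z\in H$), is to set $V:=S_c$, the top $z$-eigenspace of $S$. Then $V\subset T$ is a $\g_0$-submodule, and it is simple because any nonzero $\g_0$-submodule $W\subset S_c$ satisfies $U(\g_{-1})W=U(\g)W=S$ while $(U(\g_{-1})W)_c=W$, forcing $W=S_c$. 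Your Frobenius reciprocity argument then yields $S\cong L(V)$, and a posteriori $T=L(V)^{\g_1}=V$ is simple.
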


Clearly, $L(V)$ is a weight $\gl(m,n)$-module if and only if $V$ is a weight $\gl(m,n)_0$-module.

\section{$AW$-modules}

A $\tilde{W}$-module $M$ is called an $AW$-module if the action of $A$ on $M$ is associative, i.e.,
$$a\cdot a'\cdot v=(aa')\cdot v,t^0\cdot v=v,\forall a,a'\in A,v\in M.$$

In this section, we will classify all simple bounded $AW$-modules.

For any Lie (super)algebra $\g$, let $U(\g)$ be the universal enveloping algebra of $\g$. By the PBW Theorem, $U(\tilde W)=U(A)\cdot U(W)$. Let $\mathcal{J}$ be the left ideal of $U(\tilde W)$ generated by
$$\{t^0-1,t^\alpha\xi_I\cdot t^\beta\xi_J-t^{\alpha+\beta}\xi_I\xi_J\ |\ \alpha,\beta\in\Z_+^m, I,J\subset \{1,\dots,n\}\}.$$
It is easy to see that $\mathcal{J}$ is an ideal of $U(\tilde W)$. Let $\bar{U}$ be the quotient algebra $U(\tilde W)/\mathcal{J}$. Identify $A$ and $W$ with their images in $\bar U$, then $\bar U=A\cdot U(W)$.

$A\cdot W$ is a Lie super-subalgebra of $\bar U$, of which the bracket is given by
\begin{equation}
[a\cdot x,b\cdot y]=ax(b)\cdot y-(-1)^{|a\cdot x||b\cdot y|}by(a)\cdot x+(-1)^{|x||b|}ab\cdot [x,y],\forall a,b\in A,x,y\in W.
\end{equation}

For any $\alpha\in\Z_+^m,I\subset\{1,\dots,n\},\partial\in\{\frac{\partial}{\partial t_1},\dots,\frac{\partial}{\partial t_m},\pxi{1},\dots,\pxi{n}\}$, define
$$X_{\alpha,I,\partial}=\sum_{\substack{0\leqslant \beta\leqslant \alpha\\J\subset I}}(-1)^{|\beta|+|J|+\tau(J,I\setminus J)}\binom{\alpha}{\beta}t^\beta\xi_J\cdot t^{\alpha-\beta}\xi_{I\setminus J}\partial.$$
Then $X_{0,\vn,\partial}=\partial$.
Let
$$T=\span\{X_{\alpha,I,\partial}\ |\ \alpha\in\Z_+^m,I\subset\{1,\dots,n\},|\alpha|+|I|>0,\partial\in\{\frac{\partial}{\partial t_1},\dots,\frac{\partial}{\partial t_m},\pxi{1},\dots,\pxi{n}\}\}$$
and
$$\Delta=\span\{\frac{\partial}{\partial t_1},\dots,\frac{\partial}{\partial t_m},\pxi{1},\dots,\pxi{n}\}.$$

\begin{lemma}\label{compute}
\begin{itemize}
  \item[(1)]$[T,A]=[T,\Delta]=0.$
  \item[(2)]\begin{eqnarray*}
  &t^\alpha\xi_I\partial=\sum_{\substack{0\leqslant \beta\leqslant \alpha\\J\subset I}}(-1)^{\tau(J,I\setminus J)}\binom{\alpha}{\beta}t^\beta\xi_J\cdot X_{\alpha-\beta,I\setminus J,\partial},\\
  &\forall \alpha\in\Z_+^m,I\subset\{1,\dots,n\},\partial\in\{\frac{\partial}{\partial t_1},\dots,\frac{\partial}{\partial t_m},\pxi{1},\dots,\pxi{n}\}.
  \end{eqnarray*}
\end{itemize}
\end{lemma}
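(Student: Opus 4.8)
The statement to prove is a purely computational identity about the elements $X_{\alpha,I,\partial}$ living in the associative superalgebra $\bar U = A\cdot U(W)$: part (1) says each $X_{\alpha,I,\partial}$ commutes with all of $A$ and with $\Delta = \span\{\partial/\partial t_i,\partial/\partial\xi_j\}$, and part (2) is an inversion formula expressing the ordinary monomial $t^\alpha\xi_I\partial$ in terms of the $X$'s. The whole point of these elements (as used in \cite{XL1,XL2}) is that $X_{\alpha,I,\partial}$ is a "normally ordered" or "twisted" version of $t^\alpha\xi_I\partial$ that has trivial bracket with the abelian part $A$ and with $\Delta$, so I would first nail down part (1) and then deduce part (2) by a Möbius-style inversion.

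For part (1), I would argue as follows. Since $A$ is generated as an algebra by $t_1,\dots,t_m,\xi_1,\dots,\xi_n$ and $\Delta$ is spanned by the first-order operators, it suffices to check $[X_{\alpha,I,\partial},c]=0$ for $c$ one of these generators, using the bracket formula (2.3) for $A\cdot W$ (or rather its image in $\bar U$, where the relations $t^\alpha\xi_I\cdot t^\beta\xi_J = t^{\alpha+\beta}\xi_I\xi_J$ hold). Bracketing the defining sum $X_{\alpha,I,\partial}=\sum_{\beta\leq\alpha,\,J\subset I}(-1)^{|\beta|+|J|+\tau(J,I\setminus J)}\binom{\alpha}{\beta}\,t^\beta\xi_J\cdot t^{\alpha-\beta}\xi_{I\setminus J}\partial$ with $c$ produces, via the Leibniz rule, a sum of two kinds of terms: those where $c$ hits the polynomial coefficient $t^\beta\xi_J$, and those where $\partial$ (inside $t^{\alpha-\beta}\xi_{I\setminus J}\partial$) differentiates $c$. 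One then checks that after reindexing the summation (shifting $\beta$ by a unit vector, or moving one index between $J$ and $I\setminus J$) these two families cancel in pairs; the sign bookkeeping involving $(-1)^{|\beta|+|J|}$ and the permutation signs $\tau(\cdot,\cdot)$ is exactly engineered so that the binomial/Leibniz coefficients match up. This is the technical heart of the lemma: the cancellation is a telescoping phenomenon, and the main obstacle is keeping the signs and the $\xi$-reordering consistent when $\partial$ is an odd derivative $\partial/\partial\xi_k$ versus an even one $\partial/\partial t_k$.

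Granting part (1), part (2) is a formal inversion. Define a linear operator on the relevant span by $t^\alpha\xi_I\partial \longmapsto X_{\alpha,I,\partial}$; the defining formula for $X_{\alpha,I,\partial}$ says this operator is "lower-triangular" with respect to the partial order given by $(\beta,J)\leq(\alpha,I)$ (meaning $\beta\leq\alpha$ componentwise and $J\subset I$), with the coefficient $(-1)^{|\beta|+|J|+\tau(J,I\setminus J)}\binom{\alpha}{\beta}$ attached to the pair $(\beta,J)$, and with $1$ on the diagonal. The claimed formula (2) exhibits the inverse operator, with coefficient $(-1)^{\tau(J,I\setminus J)}\binom{\alpha}{\beta}$. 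So the content of (2) is the combinatorial identity
\begin{equation*}
\sum_{\beta\leq\gamma\leq\alpha,\; J\subset K\subset I}(-1)^{|\gamma|+|K|+\tau(K,I\setminus K)+\tau(J,K\setminus J)}\binom{\alpha}{\gamma}\binom{\gamma-\beta}{\ \cdot\ }\cdots = \delta_{\beta,\alpha}\delta_{J,I},
\end{equation*}
which factors as a product over the even part (the standard alternating-sum identity $\sum_{\beta\leq\gamma\leq\alpha}(-1)^{|\gamma-\beta|}\binom{\alpha}{\gamma}\binom{\gamma-\beta}{\alpha-\gamma}$-type cancellation, really $\sum_k(-1)^k\binom{n}{k}=0^n$ in each coordinate) and over the odd part (a signed sum over subsets $K$ with $J\subset K\subset I$, which vanishes unless $J=I$ after the $\tau$-signs are reconciled into $(-1)^{|K\setminus J|}$). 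I would present this either directly by those two classical vanishing identities, or — more slickly — by simply noting that (1) already forces the inversion: apply any element $t^\alpha\xi_I$ on the left of $X_{\beta,J,\partial}$ and use $[t^\alpha\xi_I, X_{\beta,J,\partial}]=0$ together with $X_{0,\vn,\partial}=\partial$ to generate the recursion that pins down the coefficients uniquely, matching (2). The routine binomial/subset computations I would leave to the reader or relegate to a one-line citation of \cite{XL1}.
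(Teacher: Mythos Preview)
Your plan is correct and matches the paper's approach: part~(1) is done by direct computation on generators (Leibniz rule plus reindexing for the $\Delta$ case, and a collapsing alternating sum for the $A$ case), and part~(2) is the straightforward M\"obius-type inversion via the binomial identity and the subset-sign identity, exactly as the paper carries it out. One small imprecision: your description of ``two kinds of terms that cancel in pairs'' really only applies to the bracket with $c\in\Delta$ (where $c$ hits the two factors $t^\beta\xi_J$ and $t^{\alpha-\beta}\xi_{I\setminus J}$ and the resulting sums telescope after a shift), whereas for $c\in A$ there is just a single family of terms that the paper collapses via $\sum_{\beta,J}(-1)^{|\beta|+|J|}\binom{\alpha}{\beta}=(1-1)^{|\alpha|+|I|}=0$; keep the two cases separate when you write it up.
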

\begin{proof}
(1)Let $\alpha\in\Z_+^m,I\subset\{1,\dots,n\},\partial\in\{\frac{\partial}{\partial t_1},\dots,\frac{\partial}{\partial t_m},\pxi{1},\dots,\pxi{n}\}$. For any $i\in\{1,\dots,m\}$, we have
\begin{eqnarray*}
[\frac{\partial}{\partial t_i},X_{\alpha,I,\partial}]&=&\sum_{\substack{0\leqslant\beta\leqslant\alpha\\J\subset I}}(-1)^{|\beta|+|J|+\tau(J,I\setminus J)}\binom{\alpha}{\beta}\beta_it^{\beta-e_i}\xi_J\cdot t^{\alpha-\beta}\xi_{I\setminus J}\partial\\
&&+\sum_{\substack{0\leqslant\beta\leqslant\alpha\\J\subset I}}(-1)^{|\beta|+|J|+\tau(J,I\setminus J)}\binom{\alpha}{\beta}(\alpha-\beta)_it^\beta\xi_J\cdot t^{\alpha-\beta-e_i}\xi_{I\setminus J}\partial\\
&=&\sum_{\substack{0\leqslant\beta\leqslant\alpha\\J\subset I}}(-1)^{|\beta|+|J|+\tau(J,I\setminus J)}\binom{\alpha}{\beta}\beta_it^{\beta-e_i}\xi_J\cdot t^{\alpha-\beta}\xi_{I\setminus J}\partial\\
&&-\sum_{\substack{e_i\leqslant\beta\leqslant\alpha+e_i\\J\subset I}}(-1)^{|\beta|+|J|+\tau(J,I\setminus J)}\binom{\alpha}{\beta-e_i}(\alpha-\beta+e_i)_it^{\beta-e_i}\xi_J\cdot t^{\alpha-\beta}\xi_{I\setminus J}\partial\\
&=&0.
\end{eqnarray*}

Clearly, for any $j\in\{1,\dots,n\}\setminus I$, $[\pxi{j},X_{\alpha,I,\partial}]=0$. Now let $\underline I=(l_1,\dots,l_k)$ and $s\in\{1,\dots,k\}$.

For any $J\subset I$ with $l_s\in J$, let $\underline J=(l_{i_1},\dots,l_{i_p})$ and $s=i_q$ for some $q\in\{1,\dots,p\}$. Then there are $(s-1)-(q-1)=s-q$ elements in $I\setminus J$ that are less than $l_s$. It follows that $\tau(J,I\setminus J)=s-q+\tau(J\setminus\{l_s\},I\setminus J)$. So
\begin{eqnarray*}
(-1)^{|J|+\tau(J,I\setminus J)}\pxi{l_s}(\xi_J)&=&(-1)^{|J|+s-q+\tau(J\setminus\{l_s\},I\setminus J)}(-1)^{q-1}\xi_{J\setminus\{l_s\}}\\
&=&(-1)^{s+|J\setminus\{l_s\}|+\tau(J\setminus\{l_s\},I\setminus J)}\xi_{J\setminus\{l_s\}}.
\end{eqnarray*}

For any $J\subset I$ with $l_s\notin J$, let $\underline{I\setminus J}=(l_{i_{p+1}},\dots,l_{i_k})$ and $s=i_q$ for some $q\in\{p+1,\dots,k\}$. Then there are $(k-s)-(k-q)=q-s$ elements in $J$ that are greater than $l_s$. It follows that $\tau(J,I\setminus J)=q-s+\tau(J,I\setminus(J\cup\{l_s\}))$. So
\begin{eqnarray*}
(-1)^{|J|}(-1)^{|J|+\tau(J,I\setminus J)}\pxi{l_s}(\xi_{I\setminus J})&=&(-1)^{q-s+\tau(J,I\setminus(J\cup\{l_s\}))}(-1)^{q-1-|J|}\xi_{I\setminus(J\cup\{l_s\})}\\
&=&(-1)^{s+1+|J|+\tau(J,I\setminus(J\cup\{l_s\}))}\xi_{I\setminus(J\cup\{l_s\})}.
\end{eqnarray*}

Thus,
\begin{eqnarray*}
[\pxi{l_s},X_{\alpha,I,\partial}]&=&\sum_{\substack{0\leqslant\beta\leqslant\alpha\\J\subset I}}(-1)^{|\beta|+|J|+\tau(J,I\setminus J)}\binom{\alpha}{\beta}t^\beta\pxi{l_s}(\xi_J)\cdot t^{\alpha-\beta}\xi_{I\setminus J}\partial\\
&&+\sum_{\substack{0\leqslant\beta\leqslant\alpha\\J\subset I}}(-1)^{|J|}(-1)^{|\beta|+|J|+\tau(J,I\setminus J)}\binom{\alpha}{\beta}t^\beta\xi_J\cdot t^{\alpha-\beta}\pxi{l_s}(\xi_{I\setminus J})\partial\\
&=&\sum_{\substack{0\leqslant\beta\leqslant\alpha\\l_s\in J\subset I}}(-1)^{|\beta|+s+|J\setminus\{l_s\}|+\tau(J\setminus\{l_s\},I\setminus J)}\binom{\alpha}{\beta}t^\beta\xi_{J\setminus\{l_s\}}\cdot t^{\alpha-\beta}\xi_{I\setminus J}\partial\\
&&+\sum_{\substack{0\leqslant\beta\leqslant\alpha\\l_s\notin J\subset I}}(-1)^{|\beta|+s+1+|J|+\tau(J,I\setminus(J\cup\{l_s\}))}\binom{\alpha}{\beta}t^\beta\xi_J\cdot t^{\alpha-\beta}\xi_{I\setminus(J\cup\{l_s\})}\partial\\
&=&(-1)^sX_{\alpha,I\setminus\{l_s\},\partial}+(-1)^{s+1}X_{\alpha,I\setminus\{l_s\},\partial}\\
&=&0.
\end{eqnarray*}

Therefore, $[X_{\alpha,I,\partial},\Delta]=0$.

Now let $|\alpha|+|I|>0$ and $a\in A$. We have
\begin{eqnarray*}
[X_{\alpha,I,\partial},a]&=&\sum_{\substack{0\leqslant \beta\leqslant \alpha\\J\subset I}}(-1)^{|\beta|+|J|+\tau(J,I\setminus J)}\binom{\alpha}{\beta}t^\beta\xi_J\cdot t^{\alpha-\beta}\xi_{I\setminus J}\partial(a)\\
&=&\sum_{\substack{0\leqslant\beta\leqslant\alpha\\J\subset I}}(-1)^{|\beta|+|J|}\binom{\alpha}{\beta}t^\alpha\xi_I\partial(a)\\
&=&(1-1)^{|\alpha|+|I|}t^\alpha\xi_I\partial(a)\\
&=&0.
\end{eqnarray*}

Hence, $[T,A]=[T,\Delta]=0$.

(2)For any $\vn\neq K\subset I$, $\xi_K\xi_{I\setminus K}\neq 0$ and
\begin{eqnarray*}
&&\sum_{J\subset K}(-1)^{\tau(J,I\setminus J)+|K\setminus J|+\tau(K\setminus J,I\setminus K)+\tau(J,K\setminus J)}\xi_K\xi_{I\setminus K}\\
&=&\sum_{J\subset K}(-1)^{\tau(J,I\setminus J)+|K\setminus J|+\tau(K\setminus J,I\setminus K)}\xi_J\xi_{K\setminus J}\xi_{I\setminus K}\\
&=&\sum_{J\subset K}(-1)^{\tau(J,I\setminus J)+|K\setminus J|}\xi_J\xi_{I\setminus J}\\
&=&\sum_{J\subset K}(-1)^{|K\setminus J|}\xi_I\\
&=&0.
\end{eqnarray*}
So $\sum_{J\subset K}(-1)^{\tau(J,I\setminus J)+|K\setminus J|+\tau(K\setminus J,I\setminus K)+\tau(J,K\setminus J)}=0$.

Then we have
\begin{eqnarray*}
&&\sum_{\substack{0\leqslant\beta\leqslant\alpha\\J\subset I}}(-1)^{\tau(J,I\setminus J)}\binom{\alpha}{\beta}t^\beta\xi_J\cdot X_{\alpha-\beta,I\setminus J,\partial}\\
&=&\sum_{\substack{0\leqslant\beta\leqslant\alpha\\J\subset I}}\sum_{\substack{0\leqslant\beta'\leqslant\alpha-\beta\\J'\subset I\setminus J}}(-1)^{\tau(J,I\setminus J)+|\beta'|+|J'|+\tau(J',I\setminus(J\cup J'))}\binom{\alpha}{\beta}\binom{\alpha-\beta}{\beta'}\\
&&t^\beta\xi_J\cdot t^{\beta'}\xi_{J'}\cdot t^{\alpha-\beta-\beta'}\xi_{I\setminus(J\cup J')}\partial\\
&=&\sum_{\substack{0\leqslant\beta\leqslant\alpha\\K\subset I}}\sum_{\substack{0\leqslant\beta'\leqslant\alpha-\beta\\J\subset K}}(-1)^{\tau(J,I\setminus J)+|\beta'|+|K\setminus J|+\tau(K\setminus J,I\setminus K)+\tau(J,K\setminus J)}\binom{\alpha}{\beta+\beta'}\binom{\beta+\beta'}{\beta}\\
&&t^{\beta+\beta'}\xi_K\cdot t^{\alpha-\beta-\beta'}\xi_{I\setminus K}\partial\\
&=&\sum_{\substack{K\subset I\\J\subset K}}\sum_{\substack{0\leqslant\gamma\leqslant\alpha\\0\leqslant\beta\leqslant\gamma}}(-1)^{\tau(J,I\setminus J)+|\gamma-\beta|+|K\setminus J|+\tau(K\setminus J,I\setminus K)+\tau(J,K\setminus J)}\binom{\alpha}{\gamma}\binom{\gamma}{\beta}t^\gamma\xi_K\cdot t^{\alpha-\gamma}\xi_{I\setminus K}\partial\\
&=&\sum_{\substack{K\subset I\\J\subset K}}(-1)^{\tau(J,I\setminus J)+|K\setminus J|+\tau(K\setminus J,I\setminus K)+\tau(J,K\setminus J)}\sum_{0\leqslant\gamma\leqslant\alpha}\binom{\alpha}{\gamma}(1-1)^{|\gamma|}(-1)^{|\gamma|}t^\gamma\xi_K\cdot t^{\alpha-\gamma}\xi_{I\setminus K}\partial\\
%&=&\sum_{\substack{K\subset I\\J\subset K}}(-1)^{\tau(J,I\setminus J)+|K\setminus J|+\tau(K\setminus J,I\setminus K)+\tau(J,K\setminus J)}\xi_K\cdot t^\alpha\xi_{I\setminus K}\partial\\
&=&t^\alpha\xi_I\partial.
\end{eqnarray*}
\end{proof}

\begin{lemma}\label{Talg}
\begin{itemize}
  \item[(1)]$\mathcal{B}=\{X_{\alpha,I,\partial}\ |\ \alpha\in\Z_+^m,I\subset\{1,\dots,n\},\partial\in\{\frac{\partial}{\partial t_1},\dots,\frac{\partial}{\partial t_m},\pxi{1},\dots,\pxi{n}\}\}$ is an $A$-basis of the free left $A$-module $A\cdot W$.
  \item[(2)]$T=\{x\in A\cdot W\ |\ [x,A]=[x,\Delta]=0\}$. Thus $T$ is a Lie subalgebra of $A\cdot W$.
\end{itemize}
\end{lemma}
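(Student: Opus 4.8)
### Proof proposal for Lemma \ref{Talg}

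The plan is to prove (1) and (2) more or less in tandem, since a natural basis is exactly what one needs to identify the centralizer $T$.

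For part (1), recall that $A\cdot W\subset\bar U=A\cdot U(W)$ and, by the PBW theorem applied to $\bar U=A\cdot U(W)$, the set $\{t^\alpha\xi_I\cdot t^\beta\xi_J\partial\}$ (with $\alpha,\beta\in\Z_+^m$, $I,J\subset\{1,\dots,n\}$, and $\partial$ ranging over the standard partial derivations) spans $A\cdot W$ over $\C$, and the monomials $t^\beta\xi_J\partial$ with the coefficients $t^\alpha\xi_I$ on the left form an $A$-basis of the free left $A$-module $A\cdot W$ (this is just that $W$ is a free $A$-module on $\{\partial\}$, so $A\cdot W$ is free over $A$ on the images of $t^\beta\xi_J\partial$). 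Now I would observe that the change of ``coordinates'' from the PBW basis $\{t^\beta\xi_J\partial\}$ to the proposed set $\mathcal B=\{X_{\alpha,I,\partial}\}$ is given by a \emph{unitriangular} transformation: by the very definition of $X_{\alpha,I,\partial}$, its leading term (the $\beta=\alpha$, $J=I$ summand) is $t^\alpha\xi_I\partial$ up to an invertible scalar, with all other summands having strictly smaller $|\beta|+|J|$ in the left factor; equivalently, Lemma \ref{compute}(2) exhibits the inverse change of basis explicitly, writing each $t^\alpha\xi_I\partial$ as an $A$-combination of the $X_{\alpha-\beta,I\setminus J,\partial}$ with the ``top'' coefficient equal to $1$. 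Hence the transition matrix between $\mathcal B$ and the free $A$-basis $\{t^\beta\xi_J\partial\}$ is triangular with invertible diagonal entries over $A$, so $\mathcal B$ is also a free $A$-basis of $A\cdot W$. That proves (1).

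For part (2), one inclusion is already done: $T\subseteq\{x\in A\cdot W\mid[x,A]=[x,\Delta]=0\}$ is exactly Lemma \ref{compute}(1). For the reverse inclusion, take $x\in A\cdot W$ with $[x,A]=[x,\Delta]=0$ and expand $x$ in the $A$-basis $\mathcal B$ from part (1): write $x=\sum_{I,\partial}\sum_{\alpha}a_{\alpha,I,\partial}\,X_{\alpha,I,\partial}$ with $a_{\alpha,I,\partial}\in A$, a finite sum. Using $[X_{\alpha,I,\partial},A]=[X_{\alpha,I,\partial},\Delta]=0$ again, the condition $[x,\Delta]=0$ forces $[a_{\alpha,I,\partial},\Delta]=0$ for each coefficient (after grouping by the free basis elements $X_{\alpha,I,\partial}$), and likewise $[x,A]=0$ forces $[a_{\alpha,I,\partial},A]=0$ — but the only elements of $A$ commuting with all of $\Delta$ (equivalently with all $\partial/\partial t_i$ and $\partial/\partial\xi_j$) are the scalars, since $[\,\partial/\partial t_i,\;t^\gamma\xi_K\,]$ and $[\,\partial/\partial\xi_j,\;t^\gamma\xi_K\,]$ vanish for all $i,j$ only when $t^\gamma\xi_K$ is constant. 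Hence each $a_{\alpha,I,\partial}\in\C$, and moreover the $|\alpha|+|I|=0$ terms, namely the $\partial=X_{0,\vn,\partial}$, must also be excluded: if $x$ had a nonzero component along some $X_{0,\vn,\partial}=\partial\in\Delta$, then $[x,A]$ would contain a nonzero term $[\partial,a]=\partial(a)$ for a suitable $a\in A$, contradicting $[x,A]=0$. (Concretely, $[\partial/\partial t_i, t_i]=1\neq0$.) Therefore $x\in\span\{X_{\alpha,I,\partial}\mid|\alpha|+|I|>0\}=T$. Finally, $T$ is a subalgebra: for $x,y\in T$ and $a\in A$, the super-Jacobi identity gives $[[x,y],a]=[x,[y,a]]\pm[[x,a],y]=0$, and similarly $[[x,y],\Delta]=0$; also $[x,y]\in A\cdot W$ because $A\cdot W$ is a Lie super-subalgebra of $\bar U$. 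So $[x,y]\in T$.

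The main obstacle I anticipate is purely bookkeeping: making the ``unitriangularity'' in part (1) precise. One has to choose the right partial order on the index set $(\alpha,I)$ — for instance, order first by $|\alpha|+|I|$ and refine — and check that with this order the matrix expressing $\mathcal B$ in terms of the PBW basis is genuinely triangular with units on the diagonal. Lemma \ref{compute}(2) does most of the work by giving the inverse transition explicitly with diagonal coefficient $1$, so in fact the cleanest route is to invoke \ref{compute}(2) directly: it shows every $t^\alpha\xi_I\partial$ lies in the left $A$-span of $\mathcal B$, so $\mathcal B$ spans $A\cdot W$ over $A$; and since $\mathcal B$ has the same cardinality (indexed by $(\alpha,I,\partial)$) as the known free $A$-basis $\{t^\alpha\xi_I\partial\}$ and the surjection of free $A$-modules it induces is visibly triangular, it is an isomorphism. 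Beyond that, everything reduces to the elementary fact that the centralizer of $\Delta$ in $A$ is $\C$, and to a careful sign/index chase that is already modeled on the computations in Lemma \ref{compute}.
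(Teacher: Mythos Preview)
Your proof is correct and follows essentially the same approach as the paper: for (1) you use Lemma~\ref{compute}(2) to get that $\mathcal B$ spans and then argue $A$-linear independence via the (uni)triangular change of basis, while the paper simply calls the independence ``easy to see''; for (2) both arguments expand $x$ in the basis $\mathcal B$, use $[\Delta,x]=0$ and freeness to force the $A$-coefficients to be scalars, and use $[x,A]=0$ to eliminate the $\Delta$-components, the only difference being that the paper performs these two steps in the opposite order. One small remark: your aside that ``$[x,A]=0$ forces $[a_{\alpha,I,\partial},A]=0$'' is vacuous since $A$ is super-commutative in $\bar U$, but you do not actually use it, so it does no harm.
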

\begin{proof}
(1)By Lemma \ref{compute} (2), $\mathcal{B}$ is a generating set of the free left $A$-module $A\cdot W$. And it is easy to see that $\mathcal{B}$ is $A$-linearly independent.

(2)Let $T_1=\{x\in A\cdot W\ |\ [x,A]=[x,\Delta]=0\}$. Then $T\subset T_1$ by Lemma \ref{compute} (1). Let $x\in T_1$, from (1) we know that $x=\sum_{i=1}^ka_i\cdot x_i+x'$, where $x_1,\dots,x_k$ are linearly independent elements in $T$, $a_1,\dots,a_k\in A$ and $x'\in A\cdot \Delta$. For any $a\in A$, $0=[x,a]=[x',a]$. So $x'=0$. For any $y\in \Delta$, $0=[y,x]=\sum_{i=1}^ky(a_i)\cdot x_i$. So $a_1,\dots,a_k\in\C$. Therefore $T_1\subset T$ and consequently $T=T_1$.
\end{proof}

Let $\mathcal{K}_{m,n}$ be the associative subalgebra of $\bar U$ generated by $A$ and $\Delta$.

\begin{lemma}\label{Ksimple}
Any simple $\mathcal K_{m,n}$-module is strictly simple.
\end{lemma}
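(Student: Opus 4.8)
The plan is to exhibit $\mathcal{K}_{m,n}$ as a (super-)Weyl-type algebra and then invoke a density-type argument. Concretely, $\mathcal{K}_{m,n}$ is generated inside $\bar U$ by the commuting-up-to-sign generators $t_1,\dots,t_m,\xi_1,\dots,\xi_n$ (the even and odd coordinate functions) together with the derivations $\frac{\partial}{\partial t_1},\dots,\frac{\partial}{\partial t_m},\pxi{1},\dots,\pxi{n}$. Using the Lie bracket formula for $A\cdot W$ one computes the relations: $[\frac{\partial}{\partial t_i},t_j]=\delta_{ij}$, $[\pxi{i},\xi_j]=\delta_{ij}$ (super-bracket), the $t_i$ commute among themselves, the $\xi_j$ anticommute and square to zero, the $\frac{\partial}{\partial t_i}$ commute, the $\pxi{j}$ anticommute and square to zero, and mixed even/odd generators (super-)commute. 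Hence $\mathcal{K}_{m,n}\cong A_m\otimes C_n$, where $A_m$ is the ordinary $m$-th Weyl algebra and $C_n$ is the ``odd Weyl'' (Clifford-type) superalgebra on $\xi_j,\pxi{j}$; in fact $\mathcal{K}_{m,n}$ is a simple associative superalgebra with a countable basis given by the PBW-type monomials $t^\alpha\xi_I\,\partial^\beta\,(\pxi{})^L$.

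The key step is to show that any simple $\mathcal{K}_{m,n}$-module $M$ has no proper nonzero non-graded subspace invariant under the (non-super) action. I would argue as follows. First, the center of the even part, and more usefully the fact that $\mathcal{K}_{m,n}$ contains an odd element that acts invertibly after a suitable adjustment, lets one reduce to a statement about parity. The cleanest route: let $\theta=\pxi{1}+\xi_1$ (or any odd element squaring to $1$); check from the relations that $\theta^2=\pxi{1}\xi_1+\xi_1\pxi{1}=1$, so $\theta$ is an odd unit in $\mathcal{K}_{m,n}$. Then for any $\mathcal{K}_{m,n}$-invariant subspace $N\subseteq M$ (invariant under the plain, non-graded action), $N$ is automatically $\Z_2$-graded: indeed left multiplication by the odd unit $\theta$ is a bijection $M_{\bar 0}\to M_{\bar 1}$ and $M_{\bar 1}\to M_{\bar 0}$ preserving $N$, and combining this with any even idempotent-type projector — or more simply, writing $v=v_{\bar0}+v_{\bar1}\in N$ and noting $\theta v\in N$, $\theta(\theta v)=v\in N$ and $\theta v_{\bar 0},\theta v_{\bar1}$ lie in opposite parities — forces $v_{\bar 0},v_{\bar1}\in N$. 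Hence $N$ is graded, so $N=0$ or $N=M$ by simplicity of $M$. Therefore $M$ is strictly simple.

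The main obstacle is verifying that an odd unit genuinely exists in $\mathcal{K}_{m,n}$ when $n\geq 1$, and handling the case $n=0$ separately. When $n=0$ the superalgebra $\mathcal{K}_{m,0}=A_m$ is purely even, every module is concentrated in one parity (or is a direct sum over parities component-wise), and ``strictly simple'' is immediate since there is no odd part to worry about — any invariant subspace is trivially graded. When $n\geq 1$ one must confirm $\theta=\pxi{1}+\xi_1$ satisfies $\theta^2=1$: this is exactly the super-Jacobi/bracket computation $[\pxi{1},\xi_1]_+=\pxi{1}(\xi_1)=1$ plus $\xi_1^2=(\pxi{1})^2=0$ in $\bar U$, all of which follow from the defining relation of $\mathcal K_{m,n}$ and Lemma~\ref{compute}. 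A secondary point to be careful about: the density/strict-simplicity argument needs $M$ to be a module over the associative \emph{super}algebra structure (so that the parity decomposition $M=M_{\bar 0}\oplus M_{\bar 1}$ is given and respected by homogeneous elements), which is part of our standing assumption that all modules are $\Z_2$-graded; then an invariant subspace need not a priori be graded, which is precisely the gap that the odd unit $\theta$ closes. I expect the whole argument to take only a few lines once the relations in $\mathcal{K}_{m,n}$ are recorded.
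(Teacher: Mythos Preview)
Your reduction to $n\geq 1$ and the verification that $\theta=\xi_1+\pxi{1}$ is an odd unit in $\mathcal K_{m,n}$ are both correct, but the central claim --- that the existence of an odd unit forces every $\mathcal K_{m,n}$-invariant subspace $N\subseteq M$ to be $\Z_2$-graded --- is false. A minimal counterexample: let $R=\C\cdot 1\oplus\C\cdot\epsilon$ with $|\epsilon|=\bar 1$ and $\epsilon^2=1$, and let $M=\C e_0\oplus\C e_1$ with $|e_i|=\bar i$, $\epsilon e_0=e_1$, $\epsilon e_1=e_0$. Then $M$ is a simple $R$-supermodule and $\epsilon$ is an odd unit, yet $N=\C(e_0+e_1)$ is a proper nonzero $R$-invariant subspace (indeed $\epsilon(e_0+e_1)=e_0+e_1$), so $M$ is not strictly simple. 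The gap in your reasoning is exactly at the step ``forces $v_{\bar 0},v_{\bar 1}\in N$'': from $v=v_{\bar 0}+v_{\bar 1}\in N$ and $\theta v=\theta v_{\bar 1}+\theta v_{\bar 0}\in N$ you cannot recover $v_{\bar 0}$ or $v_{\bar 1}$ by linear combinations or further applications of $\theta$, because the parity projections $M\to M_{\bar i}$ are not implemented by any element of the algebra. In the counterexample above $\theta v=v$, so no new information is produced at all; the vague appeal to an ``even idempotent-type projector'' cannot help either, since no even element of the algebra acts as the parity involution.

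What makes $\mathcal K_{m,n}$ work is not the odd unit but the supply of odd \emph{nilpotent} elements, and the paper's proof uses precisely this. One first applies the nilpotent operators $\pxi{j}$ to pass to some $0\neq w\in V'$ with $\pxi{j}w=0$ for all $j$. If $w=w_0+w_1$ is still inhomogeneous, then since $w_0$ is nonzero homogeneous one has $\mathcal K_{m,n}w_0=V$, and because the $\pxi{j}$ kill $w_0$ the odd element $x$ with $xw_0=w_1$ may be taken in the subalgebra generated by $t_i,\xi_j,\frac{\partial}{\partial t_i}$. Any odd element of that subalgebra is a sum of terms each containing at least one factor $\xi_j$, so $x^{n+1}=0$; taking the largest $k$ with $x^kw_0\neq 0$ gives $x^kw=x^kw_0+x^{k+1}w_0=x^kw_0$, a nonzero homogeneous element of $V'$. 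Your treatment of the $n=0$ case is correct as stated.
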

\begin{proof}
Let $V$ be a simple $\mathcal K_{m,n}$-module and $V'$ be a nonzero $\mathcal K_{m,n}$-invariant subspace of $V$ with $v\in V'\setminus\{0\}$. Since $\pxi{1},\dots,\pxi{n}$ act nilpotently on the finite-dimensional subspace $\C[\pxi{1},\dots,\pxi{n}]v$ of $V'$, there is a $w\in\C[\pxi{1},\dots,\pxi{n}]v\setminus\{0\}$ such that $\pxi{i}\cdot w=0,i=1,\dots,n$. If $w$ is homogeneous, $\mathcal K_{m,n}w$ is a submodule of $V$. By the simplicity of $V$, $\mathcal K_{m,n}w=V'=V$.

Suppose that $w$ is not homogeneous. Then $w=w_0+w_1$ with $w_0\in V_{\bar 0}\setminus\{0\}$ and $w_1\in V_{\bar 1}\setminus\{0\}$. Clearly, $\pxi{i}\cdot w_0=0,i=1,\dots,n$. Also, $\mathcal K_{m,n}w_0=V$. Consequently, there is an odd element $x$ in the subalgebra of $\mathcal K_{m,n}$ that is generated by $t_1,\dots,t_m,\xi_1,\dots,\xi_n,\frac{\partial}{\partial t_1},\dots,\frac{\partial}{\partial t_m}$, such that $xw_0=w_1$. Note that $x^{n+1}=0$. Let $k$ be the smallest positive integer such that $x^kw_0\neq 0$ and $x^{k+1}w_0=0$. Then $x^kw=x^kw_0+x^{k+1}w_0=x^kw_0\in V'$ is a nonzero homogeneous element in $V'$. By the simplicity of $V$, $\mathcal K_{m,n}x^kw_0=V'=V$.

Therefore $V$ is strictly simple.
\end{proof}

Note that $\mathcal K_{m,n}\cong  \mathcal K_{(1)}\otimes\mathcal K_{(2)}\otimes\cdots\otimes\mathcal K_{(m+n)}$, where $\mathcal K_{(i)}$ is the subalgebra of $\mathcal K_{m,n}$ generated by $t_i,\frac{\partial}{\partial t_i}$ for $i\in\{1,\dots,m\}$, and $\mathcal K_{(m+j)}$ is the subalgebra of $\mathcal K_{m,n}$ generated by $\xi_j,\frac{\partial}{\partial \xi_j}$ for $j\in\{1,\dots,n\}$. %For $i\in\{1,\dots,m\}$, $\mathcal K_{(i)}$ is isomorphic to $\mathcal K_{1,0}$, over which any simple weight module is one of the following
%$$t_1^{\lambda_1}\C[t_1^{\pm 1}],\lambda_1\notin\Z,\C[t_1],\C[t_1^{\pm 1}]/\C[t_1],$$
%see \cite{FGM}. For $j\in\{1,\dots,n\}$, $\mathcal K_{(m+j)}$ is isomorphic to $\mathcal K_{0,1}$, over which any simple module is isomorphic to $\C[\xi_1]$ up to a parity-change. Then it's easy to know that any simple weight $\mathcal K_{m,n}$-module is isomorphic to $V_1\otimes\dots\otimes V_{m+n}$, where each $V_i$ is a simple weight module over $\mathcal K_{(i)}$. We summarize it in the following lemma.

From \cite{FGM} and Lemma 3.5 in \cite{XL1}, we have
\begin{lemma}\label{Kweight}

(1) Let $P$ be any simple weight $\mathcal{K}_{m,n}$-module. Then $P\cong V_1\otimes \cdots \otimes V_m\otimes \C[\xi_1]\otimes\cdots\otimes \C[\xi_n]$, where every $V_i$ is one of the following simple weight $\C[t_i,\frac{\partial}{\partial t_i}]$-modules: $$t_i^{\lambda_1}\C[t_i^{\pm 1}],\C[t_i],\C[t_i^{\pm 1}]/\C[t_i],$$ where $\lambda_i\in \C\setminus \Z$.

(2) Any weight $\mathcal{K}_{m,n}$-module $V$ must have a simple submodule $V'$. Moreover, $\supp(V')=X_1\times\dots\times X_m\times S$, where each $X_i\in\{\lambda_i+\Z,\Z_+,-\N\}$ for some $\lambda_i\in\C\setminus\Z$ and $S=\{k_1e_{1}+\dots+k_ne_{n}\ |\ k_1,\dots,k_n=0,1\}$.
\end{lemma}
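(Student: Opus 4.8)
The plan is to reduce everything to the already-available description of weight modules over the Weyl algebra. Recall the factorization $\mathcal{K}_{m,n}\cong\mathcal{K}_{(1)}\otimes\cdots\otimes\mathcal{K}_{(m+n)}$, and set $\mathcal{K}':=\mathcal{K}_{(1)}\otimes\cdots\otimes\mathcal{K}_{(m)}$ (the Weyl algebra in the variables $t_1,\dots,t_m$) and $\mathcal{K}'':=\mathcal{K}_{(m+1)}\otimes\cdots\otimes\mathcal{K}_{(m+n)}$. First I would observe that $\mathcal{K}''$ is elementary: each $\mathcal{K}_{(m+j)}$ is generated by $\xi_j,\pxi{j}$ with $\xi_j^2=0$, $\pxi{j}\pxi{j}=0$ and $\xi_j\pxi{j}+\pxi{j}\xi_j=1$, hence is $4$-dimensional and isomorphic to $M_2(\C)$, with unique (strictly) simple module $\C[\xi_j]$, on which $\delta_j$ has eigenvalues $0$ and $1$. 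Therefore $\mathcal{K}''\cong M_{2^n}(\C)$ is semisimple, its only simple module being the strictly simple $\mathcal{K}''$-module $\Lambda(n)=\C[\xi_1]\otimes\cdots\otimes\C[\xi_n]$, whose set of $\{\delta_1,\dots,\delta_n\}$-weights is exactly $S$.

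For part (1): given a simple weight $\mathcal{K}_{m,n}$-module $P$, I would restrict it to $\C\otimes\mathcal{K}''$. Since $\mathcal{K}''$ is semisimple with a single isomorphism type of simple module, $P$ contains a strictly simple $\mathcal{K}''$-submodule isomorphic to $\Lambda(n)$, and Lemma \ref{density}(2), applied with $B=\mathcal{K}'$ and $B'=\mathcal{K}''$ (which has a finite, hence countable, basis), gives $P\cong P_0\otimes\Lambda(n)$ for some simple $\mathcal{K}'$-module $P_0$. Because $P$ is a weight module and $d_1,\dots,d_m\in\mathcal{K}'$, the module $P_0$ is a weight $\mathcal{K}'$-module; then the classification of simple weight modules over the Weyl algebra $\mathcal{K}'$ (from \cite{FGM}, see also \cite[Lemma 3.5]{XL1}) yields $P_0\cong V_1\otimes\cdots\otimes V_m$ with each $V_i$ one of $t_i^{\lambda_i}\C[t_i^{\pm1}]$ $(\lambda_i\in\C\setminus\Z)$, $\C[t_i]$, $\C[t_i^{\pm1}]/\C[t_i]$, which is the asserted form.

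For part (2): let $V$ be any nonzero weight $\mathcal{K}_{m,n}$-module. Here $V$ need not be simple, so instead of the density lemma I would use directly that $\mathcal{K}''\cong M_{2^n}(\C)$ is semisimple: restricting to $\C\otimes\mathcal{K}''$ gives a $\mathcal{K}'\otimes\mathcal{K}''=\mathcal{K}_{m,n}$-module isomorphism $V\cong M\otimes\Lambda(n)$, where $M=eV$ for a primitive idempotent $e\in\mathcal{K}''$ (equivalently $M=\mathrm{Hom}_{\mathcal{K}''}(\Lambda(n),V)$), and $M\neq 0$ since $V\neq 0$. Since $d_1,\dots,d_m$ act through the first tensor factor, $M$ is a weight $\mathcal{K}'$-module, so the corresponding statement over the Weyl algebra (\cite[Lemma 3.5]{XL1}) provides a simple submodule $M'\subseteq M$ with $\supp(M')=X_1\times\cdots\times X_m$, each $X_i\in\{\lambda_i+\Z,\Z_+,-\N\}$. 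Then $V':=M'\otimes\Lambda(n)$ is a $\mathcal{K}_{m,n}$-submodule of $V$, simple by Lemma \ref{density}(1) (as $M'$ is simple and $\Lambda(n)$ is strictly simple over $\mathcal{K}''$, which has countable basis), and since $d_1,\dots,d_m$ act on the $M'$-factor while $\delta_1,\dots,\delta_n$ act on the $\Lambda(n)$-factor, $\supp(V')=\supp(M')\times S=X_1\times\cdots\times X_m\times S$.

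I do not expect a deep obstacle, since the substantive input is the Weyl-algebra classification that is being quoted. The points that need genuine care are: identifying each odd factor $\mathcal{K}_{(m+j)}$ with a matrix algebra — this is what makes the tensor decomposition $V\cong M\otimes\Lambda(n)$ available for \emph{arbitrary}, not just simple, weight $V$ in part (2); checking that the multiplicity module $M$ is again a weight module for the relevant Cartan; and verifying that supports multiply as $\supp(M)\times S$. If \cite[Lemma 3.5]{XL1} is phrased only for weight $\mathcal{K}'$-modules with support in a single coset of $\Z^m$, one applies it instead to the $\mathcal{K}'$-submodule of $M$ generated by a single weight vector.
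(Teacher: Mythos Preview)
The paper does not give a proof of this lemma; it simply cites it as a consequence of \cite{FGM} and \cite[Lemma~3.5]{XL1}. Your proposal is a correct and careful unpacking of how the statement follows from those references, via the factorization $\mathcal{K}_{m,n}\cong\mathcal{K}'\otimes\mathcal{K}''$ and the observation that the odd Clifford factor $\mathcal{K}''\cong M_{2^n}(\C)$ is semisimple with unique strictly simple module $\Lambda(n)$.

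One small remark on parity: as a \emph{super}algebra each $\mathcal{K}_{(m+j)}$ has two simple supermodules, $\C[\xi_j]$ and $\Pi(\C[\xi_j])$, so the strictly simple $\mathcal{K}''$-submodule you extract from $P$ in part~(1) could be $\Pi(\Lambda(n))$ rather than $\Lambda(n)$; this is harmless since the parity shift is absorbed into the even factor $P_0$ (the lemma does not fix the grading on the $V_i$). In part~(2), choosing the \emph{even} primitive idempotent $e=\prod_j\pxi{j}\xi_j\in\mathcal{K}''$ makes $M=eV$ a $\Z_2$-graded $\mathcal{K}'$-submodule and the isomorphism $V\cong M\otimes\Lambda(n)$ graded, so the argument goes through in the super setting as written.
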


\begin{lemma}
There is an associative superalgebra isomorphism
$$\pi_1:\mathcal{K}_{m,n}\otimes U(T)\rightarrow \bar U,\pi_1(x\otimes y)=x\cdot y,\forall x\in \mathcal{K}_{m,n},y\in U(T).$$
\end{lemma}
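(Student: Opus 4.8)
The plan is to verify in turn that $\pi_1$ is a well‑defined superalgebra homomorphism, that it is surjective, and that it is injective; only the injectivity requires real work. \emph{Well‑definedness:} by Lemma \ref{compute}(1) every element of $T$ supercommutes in $\bar U$ with the algebra generators $A$ and $\Delta$ of $\mathcal K_{m,n}$, and hence — since $[y,-]$ is a superderivation of $\bar U$ for each $y\in T$ — with all of $\mathcal K_{m,n}$. As $T$ is a Lie super‑subalgebra of $\bar U$ by Lemma \ref{Talg}(2), its inclusion extends to an algebra map $U(T)\to\bar U$ whose image still supercommutes with $\mathcal K_{m,n}$; consequently $x\otimes y\mapsto x\cdot y$ is an associative superalgebra homomorphism $\mathcal K_{m,n}\otimes U(T)\to\bar U$, namely $\pi_1$.

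\emph{Surjectivity.} The image of $\pi_1$ is the subalgebra $\mathcal K_{m,n}\cdot U(T)$ of $\bar U$ (it is a subalgebra precisely because the two factors supercommute). It contains $A$, and by Lemma \ref{compute}(2) it contains every standard basis vector $t^\alpha\xi_I\partial$ of $W$; hence it contains the subalgebra of $\bar U$ generated by $A$ and $W$, which is all of $\bar U=A\cdot U(W)$.

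\emph{Injectivity.} I would filter $\bar U$ by giving $A$ degree $0$ and each $t^\alpha\xi_I\partial\in W$ degree $1$ (the order filtration of $U(W)$, extended over $A\cdot U(W)$); using that multiplication is a linear isomorphism $A\otimes_{\C}U(W)\xrightarrow{\sim}\bar U$ one gets $\mathrm{gr}\,\bar U\cong A\otimes S(W)$, with $A$ central and $S(W)$ the supersymmetric algebra on $W$. Filter $\mathcal K_{m,n}$ by the order of differential operators, so $\mathrm{gr}\,\mathcal K_{m,n}\cong A\otimes S(\Delta)$; filter $U(T)$ by its PBW filtration; and put the total filtration on $\mathcal K_{m,n}\otimes U(T)$, so that $\mathrm{gr}\,(\mathcal K_{m,n}\otimes U(T))\cong A\otimes S(\Delta\oplus T)$. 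Then $\pi_1$ is filtered, and $\mathrm{gr}\,\pi_1\colon A\otimes S(\Delta\oplus T)\to A\otimes S(W)$ is the $A$‑algebra homomorphism induced by the $A$‑linear map $\bar\psi\colon A\otimes(\Delta\oplus T)\to A\otimes W$ obtained from the defining formula of $X_{\alpha,I,\partial}$ by replacing each product $t^\beta\xi_J\cdot t^{\alpha-\beta}\xi_{I\setminus J}\partial$ with $t^\beta\xi_J\otimes t^{\alpha-\beta}\xi_{I\setminus J}\partial$ (on the $\Delta$‑part this is just the inclusion $\Delta\hookrightarrow W$). By Lemma \ref{compute}(2) this $\bar\psi$ is exactly the change of basis, in the free $A$‑module $A\cdot W$, from the $A$‑basis $\{X_{\alpha,I,\partial}\}$ of Lemma \ref{Talg}(1) back to the standard $A$‑basis $\{t^\alpha\xi_I\partial\}$; ordering the common index set by $|\alpha|+|I|$ exhibits it as unitriangular (the poset of smaller indices is finite), hence an $A$‑module isomorphism. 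Therefore $\mathrm{gr}\,\pi_1$, being the supersymmetric‑algebra functor applied to the $A$‑module isomorphism $\bar\psi$, is an isomorphism, and since both filtrations are exhaustive and bounded below, $\pi_1$ is an isomorphism.

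The step I expect to be the main obstacle is this injectivity argument: one has to pin down $\mathrm{gr}\,\bar U$ (which rests on the smash‑product, i.e.\ PBW, description $\bar U=A\# U(W)$) and recognize that passage to principal symbols turns the identity of Lemma \ref{compute}(2) into a genuine triangular change of $A$‑bases. Carrying the signs $(-1)^{\tau(J,I\setminus J)}$ and the $\Z_2$‑grading correctly through all of this is where errors are most likely to creep in; everything else is bookkeeping.
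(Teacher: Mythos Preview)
Your proof is correct, but it follows a different route than the paper. For well-definedness you agree with the paper. For the isomorphism part, the paper instead constructs an explicit inverse: it observes that $\pi_1$ restricts to a Lie superalgebra isomorphism $\iota$ from $\g:=A\otimes T+(A\Delta+A)\otimes\C$ onto $A\cdot W+A$ (using Lemma~\ref{Talg}(1)), then transports the inclusion $\tilde W\hookrightarrow A\cdot W+A$ through $\iota^{-1}$ to get a Lie map $\eta:\tilde W\to\mathcal K_{m,n}\otimes U(T)$; this extends to $U(\tilde W)$, kills $\mathcal J$, and yields $\bar\eta:\bar U\to\mathcal K_{m,n}\otimes U(T)$ with $\pi_1=\bar\eta^{-1}$. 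The payoff of the paper's route is an explicit formula for $\bar\eta$ on $W$ (the displayed expression for $\eta(t^\alpha\xi_I\partial)$), which is exactly what is used afterwards to write down the homomorphism $\pi$. Your filtration/associated-graded argument is cleaner conceptually and avoids checking that $\bar\eta$ really is inverse to $\pi_1$, but it does not hand you that formula for free. One small wording point: the map $\bar\psi$ you describe is the expression of each $X_{\alpha,I,\partial}$ in the standard $A$-basis (the \emph{definition} of $X$), whereas Lemma~\ref{compute}(2) is the inverse change of basis; either way your unitriangularity observation (or simply Lemma~\ref{Talg}(1)) gives that $\bar\psi$ is an $A$-module isomorphism, so the conclusion stands.
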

\begin{proof}
Since $T$ is a Lie super-subalgebra of $\bar U$, the restriction of $\pi_1$ on $U(T)$ is well-defined. By Lemma \ref{compute} (1), $\pi_1(\mathcal{K}_{m,n})$ and $\pi_1(U(T))$ are super-commutative. So $\pi_1$ is a well-defined homomorphism of associative superalgebras.

Let $\g=A\otimes T+(A\Delta+A)\otimes\C$. From Lemma \ref{Talg} (1), it is easy to see that $\iota:=\pi_1|_{\g}:\g\rightarrow A\cdot W+A$ is a Lie superalgebra isomorphism. Then the restriction of $\iota^{-1}$ on $\tilde{W}=W+A$ gives a Lie superalgebra homomorphism $\eta:\tilde{W}\rightarrow\mathcal{K}_{m,n}\otimes U(T)$. By Lemma \ref{compute} (2),
\begin{eqnarray*}
&\eta(t^\alpha\xi_I)=t^\alpha\xi_I\otimes 1,\eta(t^\alpha\xi_I\partial)=\sum_{\substack{0\leqslant \beta\leqslant \alpha\\J\subset I\\ \alpha\neq\beta\ or\ J\neq I}}(-1)^{\tau(J,I\setminus J)}\binom{\alpha}{\beta}t^\beta\xi_J\otimes X_{\alpha-\beta,I\setminus J,\partial}+t^\alpha\xi_I\partial\otimes 1,\\
&\forall \alpha\in\Z_+^m,I\subset\{1,\dots,n\},\partial\in\{\frac{\partial}{\partial t_1},\dots,\frac{\partial}{\partial t_m},\pxi{1},\dots,\pxi{n}\}.
\end{eqnarray*}

$\eta$ induces an associative superalgebra homomorphism $\tilde \eta:U(\tilde W)\rightarrow \mathcal{K}_{m,n}\otimes U(T)$. Clearly, $\tilde\eta(\mathcal{J})=0$. Then we have the induced associative superalgebra homomorphism $\bar\eta:\bar U\rightarrow \mathcal{K}_{m,n}\otimes U(T)$. It's easy to see that $\pi_1=\bar\eta^{-1}$. Hence $\pi_1$ is an isomorphism.
\end{proof}

Let $\m$ be the ideal of $A$ generated by $t_1,\dots,t_m,\xi_1,\dots,\xi_n$. Then $\m\Delta$ is a super-subalgebra of $W$. For any $k\in\N$, $\m^k\Delta$ is an ideal of $\m\Delta$.

\begin{lemma}
The linear map $\pi_2:\m\Delta\rightarrow T$ defined by
$$\pi_2(t^\alpha\xi_I\partial)=X_{\alpha,I,\partial},\forall t^\alpha\xi_I\in\m,\partial\in\{\frac{\partial}{\partial t_1},\dots,\frac{\partial}{\partial t_m},\pxi{1},\dots,\pxi{n}\},$$
is an isomorphism of Lie superalgebras.
\end{lemma}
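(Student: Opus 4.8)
The plan is to show that $\pi_2$ is a linear isomorphism and then that it preserves the superbracket, the second point being obtained by realising $X_{\alpha,I,\partial}$ as the ``constant term in the $T$-direction'' of the operator by which the derivation $t^\alpha\xi_I\partial$ acts on a polynomial tensor module.

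Bijectivity is immediate: the elements $t^\alpha\xi_I\partial$ with $t^\alpha\xi_I\in\m$ form part of the standard basis of $W$ and hence a basis of $\m\Delta$, while the $X_{\alpha,I,\partial}$ with $|\alpha|+|I|>0$ span $T$ by definition and are linearly independent, being among the elements of the $A$-basis $\mathcal{B}$ of the free left $A$-module $A\cdot W$ (Lemma \ref{Talg}(1)); moreover each $X_{\alpha,I,\partial}$ is homogeneous of the same parity as $t^\alpha\xi_I\partial$, since every summand in its definition is. Thus $\pi_2$ takes a homogeneous basis bijectively onto a homogeneous basis, so it is an even linear isomorphism.

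For the bracket, let $A$ be the $\bar U$-module coming from its standard $AW$-module structure (the $t^\alpha\xi_I$ acting by multiplication and the $\partial$'s by the corresponding superderivations), restrict it to a $\mathcal{K}_{m,n}$-module, and form $F:=A\otimes U(T)$, where $U(T)$ carries the left regular $T$-action; then $F$ is a module over $\mathcal{K}_{m,n}\otimes U(T)$, hence (through $\pi_1$ and the embedding $\tilde W\hookrightarrow\bar U$) a $\tilde W$-module on which $u\in\tilde W$ acts as $\eta(u)$. Let $\phi\colon F=(\C\cdot 1\oplus\m)\otimes U(T)\to U(T)$ be the projection along $\m\otimes U(T)$. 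Using the explicit formula for $\eta(t^\alpha\xi_I\partial)$ from the preceding lemma together with $\partial(1)=0$, one checks that for every $u=t^\alpha\xi_I\partial\in\m\Delta$ and every $y\in U(T)$,
\begin{equation*}
\eta(u)\cdot(1\otimes y)\in 1\otimes(X_{\alpha,I,\partial}\,y)+\m\otimes U(T),\qquad \eta(u)\cdot\bigl(\m\otimes U(T)\bigr)\subseteq\m\otimes U(T),
\end{equation*}
the second inclusion because $\m$ is an ideal of $A$ and $t^\alpha\xi_I\in\m$; by linearity both statements hold for all $u\in\m\Delta$, and the first reads $\phi\bigl(\eta(u)\cdot(1\otimes y)\bigr)=\pi_2(u)\,y$.

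Now take homogeneous $u,w\in\m\Delta$. Since $\m\Delta$ is a subalgebra, $[u,w]\in\m\Delta$, so $\phi\bigl(\eta([u,w])\cdot(1\otimes 1)\bigr)=\pi_2([u,w])$. On the other hand $\eta([u,w])=\eta(u)\eta(w)-(-1)^{|u||w|}\eta(w)\eta(u)$, and applying the two displayed properties successively — first $\eta(w)$ sends $1\otimes 1$ into $1\otimes\pi_2(w)+\m\otimes U(T)$, then $\eta(u)$ is applied, then $\phi$ is taken — gives $\phi\bigl(\eta(u)\eta(w)\cdot(1\otimes 1)\bigr)=\pi_2(u)\pi_2(w)$ and, symmetrically, $\phi\bigl(\eta(w)\eta(u)\cdot(1\otimes 1)\bigr)=\pi_2(w)\pi_2(u)$. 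Hence $\pi_2([u,w])=\pi_2(u)\pi_2(w)-(-1)^{|u||w|}\pi_2(w)\pi_2(u)=[\pi_2(u),\pi_2(w)]$, using that $T$ is closed under superbracket and that $\pi_2$ is even; so $\pi_2$ is a homomorphism, and with bijectivity it is an isomorphism of Lie superalgebras. The only real computation is the pair of displayed properties of $\eta(u)$ on $F$: this is pure bookkeeping in the $\eta$-formula — the ``$\otimes 1$'' summand kills $1$ because $\partial(1)=0$, each monomial $t^\beta\xi_J$ with $(\beta,J)\neq(0,\varnothing)$ lies in $\m$, and $t^\alpha\xi_I\cdot A\subseteq\m$ whenever $|\alpha|+|I|>0$ — together with keeping the Koszul signs straight, and I do not expect a genuine obstacle there.
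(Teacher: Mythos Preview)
Your proof is correct. The paper takes a shorter, purely structural route: it observes that inside $A\cdot W$ one has $\m\Delta\subset\m\cdot\Delta+A\cdot T$ and realises $\pi_2$ as the composite of natural Lie superalgebra maps
\[
\m\Delta\ \hookrightarrow\ \m\cdot\Delta+A\cdot T\ \to\ (\m\cdot\Delta+A\cdot T)/(\m\cdot\Delta+\m\cdot T)\ \cong\ (A\cdot T)/(\m\cdot T)\ \cong\ T,
\]
which sends $t^\alpha\xi_I\partial\mapsto X_{\alpha,I,\partial}$ by Lemma~\ref{compute}(2); being a composite of Lie homomorphisms it automatically preserves brackets. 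Your argument computes essentially the same quotient, but through a representation: you build $F=A\otimes U(T)$, show that $\m\otimes U(T)$ is an $\m\Delta$-submodule (your second displayed property), and identify the induced action on $F/(\m\otimes U(T))\cong U(T)$ with left multiplication by $\pi_2(u)$ (your first). Both approaches rest on the same three ingredients --- the expansion of Lemma~\ref{compute}(2), $\partial(1)=0$, and $\m$ being an ideal of $A$ --- but the paper's chain of quotients stays entirely within Lie theory and needs no auxiliary module or Koszul bookkeeping, while yours is more hands-on and makes the mechanism visible at the level of operators on a concrete space.
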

\begin{proof}
$\pi_2$ is clearly an isomorphism of superspaces. Consider the following combination of natural Lie superalgebra homomorphisms:
$$\m\Delta\subset \m\cdot\Delta+A\cdot T\rightarrow(\m\cdot \Delta+A\cdot T)/(\m\cdot\Delta+\m\cdot T)\rightarrow(A\cdot T)/(\m\cdot T)\rightarrow T$$
This homomorphism, which maps $t^\alpha\xi_I\partial$ to $X_{\alpha,I,\partial}$, is just the linear map $\pi_2$.
\end{proof}

\begin{lemma}\label{pi3}
\begin{itemize}
  \item[(1)]$\m\Delta/\m^2\Delta\cong\gl(m,n)$.
  \item[(2)]Suppose that $V$ is a simple weight $\m\Delta$-module. Then $\m^2\Delta\cdot V=0$. Thus $V$ could be viewed as a simple weight $\gl(m,n)$-module.
\end{itemize}
\end{lemma}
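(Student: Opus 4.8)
The plan is to realize $\m\Delta$ as a $\Z_+$-graded Lie superalgebra and to read off everything from the grading. First I would write $\m=\bigoplus_{j\geq 1}\m_j$, where $\m_j$ is the span of the monomials $t^\alpha\xi_I$ with $|\alpha|+|I|=j$, and set $\mathfrak{g}_k:=\m_{k+1}\Delta$ for $k\geq 0$. Since every $\partial\in\Delta$ lowers monomial degree by exactly one, the bracket of derivations satisfies $[\mathfrak{g}_i,\mathfrak{g}_j]\subseteq\mathfrak{g}_{i+j}$, so $\m\Delta=\bigoplus_{k\geq 0}\mathfrak{g}_k$ is a $\Z_+$-graded Lie superalgebra (compatibly with its $\Z_2$-grading); here $\m^2\Delta=\bigoplus_{k\geq 1}\mathfrak{g}_k$ is the ideal of strictly positive degree and $\mathfrak{g}_0=\m_1\Delta$ is the degree-zero subalgebra.

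For part (1), since $\m^2\Delta$ is exactly the sum of the positive homogeneous components, the projection identifies $\m\Delta/\m^2\Delta$ with $\mathfrak{g}_0=\m_1\Delta=\span\{t_i\partial_{t_j},\ \xi_i\partial_{\xi_j},\ t_i\partial_{\xi_j},\ \xi_j\partial_{t_i}\}$ as Lie superalgebras. Then I would identify $\m_1\Delta$ with $\gl(m,n)$: the subalgebra $\m_1\Delta$ preserves the degree-one subspace $\m_1=\span\{t_1,\dots,t_m,\xi_1,\dots,\xi_n\}$, which as a superspace is $\C^{m|n}$ with the $t_i$ even and the $\xi_j$ odd, and restriction to $\m_1$ is a homomorphism of Lie superalgebras $\m_1\Delta\to\gl(\C^{m|n})=\gl(m,n)$ sending $t_i\partial_{t_j}\mapsto E_{i,j}$, $\xi_i\partial_{\xi_j}\mapsto E_{m+i,m+j}$, $t_i\partial_{\xi_j}\mapsto E_{i,m+j}$ and $\xi_j\partial_{t_i}\mapsto E_{m+j,i}$. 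This map is injective because $\m_1$ generates $A$ and a derivation is determined by its values on the generators, and surjective because both sides have dimension $(m+n)^2$; hence it is an isomorphism, which together with the projection gives (1).

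For part (2), the key observation is that $E:=\sum_{i=1}^m d_i+\sum_{j=1}^n\delta_j=\sum_i t_i\partial_{t_i}+\sum_j\xi_j\partial_{\xi_j}$ lies in $H$, and a short computation with the bracket of derivations gives $[E,x]=kx$ for every $x\in\mathfrak{g}_k$, so $E$ is the grading operator. Let $V$ be a simple weight $\m\Delta$-module. Since $E\in H$, it acts diagonalizably and $V=\bigoplus_{c\in\C}V[c]$ with $V[c]$ the $E$-eigenspace of eigenvalue $c$; from $[E,x]=kx$ on $\mathfrak{g}_k$ we get $\mathfrak{g}_k\cdot V[c]\subseteq V[c+k]$ for all $k\geq 0$. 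Now fix any $c$ with $V[c]\neq 0$ and a nonzero homogeneous weight vector $v\in V[c]$ (weight spaces are $\Z_2$-graded since $H$ is even); by simplicity $V=U(\m\Delta)v$, and because $\m\Delta$ has only non-negative degrees this lies in $\bigoplus_{k\geq 0}V[c+k]$. Hence $V[c']=0$ whenever $c'-c\notin\Z_+$, and since this holds for every $E$-eigenvalue $c$ occurring in $V$, there is only one such eigenvalue $c_0$, i.e.\ $V=V[c_0]$. Therefore $\m^2\Delta\cdot V=\sum_{k\geq 1}\mathfrak{g}_k\cdot V[c_0]\subseteq\sum_{k\geq 1}V[c_0+k]=0$. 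Consequently $V$ descends to a module over $\m\Delta/\m^2\Delta\cong\gl(m,n)$, which is simple because its $\gl(m,n)$- and $\m\Delta$-submodules coincide, and a weight module because the image of $H$ is the standard Cartan subalgebra of $\gl(m,n)$.

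I expect the main obstacle to be purely a matter of sign bookkeeping in part (1): one must check that the explicit assignment respects the super-bracket, matching the sign conventions of the Witt-superalgebra bracket against those of $\gl(m,n)$ on odd elements. The rest is routine (verifying $[\mathfrak{g}_i,\mathfrak{g}_j]\subseteq\mathfrak{g}_{i+j}$ and $[E,x]=kx$), and the substantive idea behind (2) is simply that a simple weight module over a $\Z_+$-graded Lie superalgebra whose grading element lies in the weight Cartan must be concentrated in a single graded layer, forcing the positive part to act by zero.
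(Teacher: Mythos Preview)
Your proposal is correct and follows essentially the same approach as the paper. Both proofs realize $\m\Delta$ as a $\Z_+$-graded Lie superalgebra via the grading element $E=d_1+\cdots+d_m+\delta_1+\cdots+\delta_n\in H$, identify the degree-zero part with $\gl(m,n)$ by the same explicit assignment $t_i\partial_{t_j}\leftrightarrow E_{i,j}$ etc., and use that $U(\m\Delta)$ raises $E$-eigenvalues to conclude the positive part acts trivially on a simple weight module. The only cosmetic difference in part (2) is the final step: you push the argument to show $V$ is concentrated in a single $E$-eigenvalue $c_0$ (since any two eigenvalues $c,c'$ satisfy $c'-c\in\Z_+$ and $c-c'\in\Z_+$), whereas the paper stops at ``eigenvalues lie in $c+\Z_+$'' and instead observes that $\m^2\Delta\cdot V$ is a submodule whose $E$-spectrum misses $c$, hence is proper, hence zero by simplicity; both conclusions are immediate from the same grading observation.
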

\begin{proof}
(1)Define the linear map $\pi_3:\gl(m,n)\rightarrow\m\Delta/\m^2\Delta$ by
\begin{eqnarray*}
&\pi_3(E_{i,j})=t_i\partial_j+\m^2\Delta,\pi_3(E_{m+p,j})=\xi_p\partial_j+\m^2\Delta,\pi_3(E_{i,m+q})=t_i\pxi{q}+\m^2\Delta,\\
&\pi_3(E_{m+p,m+q})=\xi_p\pxi{q}+\m^2\Delta,\forall i,j\in\{1,\dots,m\},p,q\in\{1,\dots,n\}.
\end{eqnarray*}
It is easy to verify that $\pi_3$ is an isomorphism of Lie superalgebras.

(2)It is clear that the adjoint action of $d=d_1+\dots+d_m+\delta_1+\dots+\delta_n$ on $W$ is diagonalizable. For any $k\in\Z$, let $W_{k}=\{x\in W\ |\ [d,x]=kx\}$. Then
\begin{eqnarray*}
&W_{k-1}=\span\{t^\alpha\xi_I\frac{\partial}{\partial t_i},t^\alpha\xi_I\pxi{j}\ |\ \alpha\in\Z_+^m,I\subset\{1,\dots,n\},\\
&|\alpha|+|I|=k,i\in\{1,\dots,m\},j\in\{1,\dots,n\}\},\forall k\in\Z_+
\end{eqnarray*}
and $$W=\sum_{l=-1}^\infty W_l,\m^k\Delta=\sum_{l=k-1}^\infty W_l,\forall k\in\N.$$

Let $v$ be a nonzero homogeneous weight vector of $V$. Then $d\cdot v=cv$ for some $c\in\C$. By the simplicity of $V$, $V=U(\m\Delta)v$. So the action of $d$ on $V$ is diagonalizable and the eigenvalues of $d$ on $V$ are contained in $c+\Z_+$. Since $\m^2\Delta$ is an ideal of $\m\Delta$, $\m^2\Delta V$ is a submodule of $V$. The eigenvalues of $d$ on $\m^2\Delta V$ are contained in $c+\N$, which does not contain $c$. Thus $\m^2\Delta V\neq V$. By the simplicity of $V$, $\m^2\Delta V=0$.
\end{proof}

We now have the associative superalgebra homomorphism $\pi:\bar U\stackrel{\pi_1^{-1}}{\longrightarrow}\mathcal K_{m,n}\otimes U(T)\stackrel{1\otimes \pi_2^{-1}}{\longrightarrow}\mathcal K_{m,n}\otimes U(\m\Delta)\longrightarrow\mathcal K_{m,n}\otimes U(\m\Delta/\m^2\Delta)\stackrel{1\otimes\pi_3^{-1}}{\longrightarrow}\mathcal K_{m,n}\otimes U(\gl(m,n))$ with
\begin{eqnarray*}
&\pi(t^\alpha\xi_I\frac{\partial}{\partial t_i})=t^\alpha\xi_I\frac{\partial}{\partial t_i}\otimes 1+\sum_{k=1}^m\alpha_kt^{\alpha-e_k}\xi_I\otimes E_{k,i}+(-1)^{|I|-1}\sum_{k=1}^n\pxi{k}(t^\alpha\xi_I)\otimes E_{m+k,i},\\
&\pi(t^\alpha\xi_I\pxi{j})=t^\alpha\xi_I\pxi{j}\otimes 1+\sum_{k=1}^m\alpha_kt^{\alpha-e_k}\xi_I\otimes E_{k,m+j}\\
&+(-1)^{|I|-1}\sum_{k=1}^n\pxi{k}(t^\alpha\xi_I)\otimes E_{m+k,m+j},\\
&\pi(t^\alpha\xi_I)=t^\alpha\xi_I\otimes 1,\forall \alpha\in\Z_+^m,I\subset\{1,\dots,n\},i\in\{1,\dots,m\},j\in\{1,\dots,n\}.
\end{eqnarray*}

Let $P$ be a $\mathcal K_{m,n}$-module and $M$ be a $\gl(m,n)$-module. Define the $AW$-module $F(P,M)=P\otimes M$ by
$$x\cdot(u\otimes v)=\pi(x)\cdot(u\otimes v),x\in\bar U,u\in P,v\in M.$$
Note that
$$\pi(d_i)=d_i\otimes 1+1\otimes E_{i,i},\pi(\delta_j)=\delta_j\otimes 1+1\otimes E_{m+j,m+j}.$$
Then $F(P,M)$ is a weight $AW$-module if $P$ is a weight $\mathcal K_{m,n}$-module and $M$ is a weight $\gl(m,n)$-module.

\begin{lemma}\label{cusF(P,M)}
Suppose that $P$ is a simple weight $\mathcal K_{m,n}$-module and $M$ is a simple weight $\gl(m,n)$-module. Then $F(P,M)$ is a bounded $AW$-module if and only if $M$ is isomorphic to the $\gl(m,n)$-module $L(V_1\otimes V_2)$ for some finite-dimensional simple $\gl_m$-module $V_1$ and some simple bounded $\gl_n$-module $V_2$ up to a parity-change.
\end{lemma}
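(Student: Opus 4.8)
The plan is to analyze the $\gl(m,n)$-module structure of $M$ through its $\gl(m,n)_{\bar 0} = \gl_m \oplus \gl_n \oplus (\text{center})$-decomposition, using Lemma~\ref{L(V)} to reduce to the Kac module picture. First I would establish the easy direction: if $M = L(V_1 \otimes V_2)$ with $V_1$ finite-dimensional simple over $\gl_m$ and $V_2$ simple bounded over $\gl_n$, then since $K(V_1 \otimes V_2) \cong \Lambda(\gl(m,n)_{-1}) \otimes (V_1 \otimes V_2)$ as superspaces and $\dim \Lambda(\gl(m,n)_{-1}) = 2^{mn} < \infty$, the weight multiplicities of $K(V_1 \otimes V_2)$ — hence of its quotient $M$ — are bounded exactly when those of $V_1 \otimes V_2$ are, i.e. when $V_1$ is finite-dimensional and $V_2$ is bounded. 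Combined with Lemma~\ref{Kweight}, which tells us that the simple weight $\mathcal{K}_{m,n}$-module $P$ has support $X_1 \times \cdots \times X_m \times S$ with each $X_i$ a coset-like set in $\C$ and $S$ finite of size $2^n$, one checks that a weight of $F(P,M) = P \otimes M$ is obtained as a sum of a weight of $P$ and a weight of $M$; boundedness of $P \otimes M$ then follows from a counting argument like the one in the commented-out lemma (at most finitely many $\gl_m$-weight-shifts contribute, because the even support of $P$ is "one-dimensional-like" in each $t_i$-direction while $M$'s even support in the $\gl_m$-directions must be finite for the product to stay bounded).

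For the hard direction, assume $F(P,M)$ is bounded. The key structural input is that, via $\pi$, the subalgebra $1 \otimes U(\gl(m,n)) \subset \mathcal{K}_{m,n} \otimes U(\gl(m,n))$ acts on $P \otimes M$, and I would isolate a single weight line of $P$: picking $(\lambda_0,\mu_0) \in \supp(P)$ with $P_{(\lambda_0,\mu_0)} \neq 0$, the subspace $P_{(\lambda_0,\mu_0)} \otimes M$ is acted on by the image of $\gl(m,n)$ twisted by scalars, and boundedness of $F(P,M)$ forces $M$ itself to be a bounded weight $\gl(m,n)$-module — here I would run the argument of the excised lemma essentially verbatim, using Lemma~\ref{Kweight}(1) to find, for any prescribed finite box of $\gl_m$-weight shifts, a single weight of $P$ all of whose translates in that box also lie in $\supp(P)$; if $M$ had unbounded $\gl_m$-weight-support or unbounded multiplicities one produces arbitrarily large linearly independent families in a single weight space of $P \otimes M$, a contradiction. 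So $M$ is a simple bounded weight $\gl(m,n)$-module.

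Now invoke Lemma~\ref{L(V)}: up to parity change $M = L(V)$ for a simple weight $\gl(m,n)_{\bar 0}$-module $V$, and $M$ bounded $\iff K(V)$ bounded $\iff V$ bounded (again by the $2^{mn}$-dimensional exterior factor). Decompose $\gl(m,n)_{\bar 0} = \gl_m \oplus \gl_n$ modulo the one-dimensional center spanned by, say, $\sum E_{i,i} - \sum E_{m+j,m+j}$ (or simply note $\gl(m,n)_{\bar 0} = \gl_m \oplus \gl_n$ directly as the even part); a simple weight module over $\gl_m \oplus \gl_n$ is an (outer) tensor product $V_1 \otimes V_2$ with $V_1$ simple weight over $\gl_m$ and $V_2$ simple weight over $\gl_n$, and $V$ is bounded iff both factors are. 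Finally, a simple bounded weight $\gl_m$-module is finite-dimensional: this is the classical fact (Fernando/Futorny, or a direct argument — a bounded weight module over a semisimple-plus-center Lie algebra whose "root directions" are all even and reductive must be finite-dimensional by Mathieu's classification, since $\gl_m$ admits no infinite-dimensional bounded simples other than via the $\sll_m$ cuspidal ones which exist only for $\sll_2$... actually $\gl_m$ for $m \geq 2$ does have cuspidal modules) — I must be careful here: the correct statement is that $V_1$ is finite-dimensional because the parabolic direction coming from $\gl(m,n)_1$ forces integrality; more precisely the highest-weight-type constraints in the construction of $L(V)$ (the Kac module top is nonzero only when $V$ is finite-dimensional over the $\gl_m$-part by a standard $\sll_2$-string argument using the odd root vectors $E_{i,m+j}$). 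The main obstacle is precisely this last point: showing that boundedness of $M = L(V)$ together with the superalgebra structure (not boundedness of $V$ alone) forces $V_1$ to be finite-dimensional while only forcing $V_2$ to be bounded — i.e., the asymmetry between the even $\gl_m$ and the odd $\gl_n$ directions, which comes from the fact that $\gl(m,n)_{-1}$ has $\gl_m$ acting on the "$i$"-index and $\gl_n$ on the "$j$"-index, so finitely many odd creation operators can only correct the $\gl_n$-weights by a bounded amount but would need $V_1$ already finite to keep $\gl_m$-weights bounded after taking the simple top. I would handle this by a weight-counting argument inside $K(V_1 \otimes V_2)$ projected to its top $L(V_1 \otimes V_2)$, showing the $\gl_m$-character of the top still dominates that of $V_1$ up to the finite shift set $\{$weights of $\Lambda(\gl(m,n)_{-1})\}$, hence $V_1$ must be finite-dimensional, whereas no such constraint upgrades "$V_2$ bounded" to "$V_2$ finite-dimensional."
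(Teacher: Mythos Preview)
Your sufficiency direction matches the paper's argument and is correct.

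The necessity direction has a genuine gap, which you yourself flag as ``the main obstacle'' and do not resolve. You run the translation argument (choose $\tilde\lambda$ so that a whole box of shifts lies in $X_1\times\cdots\times X_m$) against $M$ and conclude that $M$ is a bounded $\gl(m,n)$-module. You also assert that this shows $M$ has finite $\gl_m$-weight-support, but that does not follow: if $(\lambda^{(1)},\mu^{(1)}),\ldots,(\lambda^{(k)},\mu^{(k)})$ are weights of $M$ with the $\lambda^{(i)}$ distinct, the tensors $w_i\otimes v_i$ you build have $\gl_n$-component $\tilde\mu+\mu^{(i)}$, which need not be constant, so they do not all lie in a single weight space of $F(P,M)$. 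Having only established that $M$ (hence $V$, hence $V_1$) is bounded, you then try to upgrade this to $V_1$ finite-dimensional, and correctly note that this is false in general: $\gl_m$ has infinite-dimensional simple bounded (cuspidal) weight modules for every $m\geq 2$. None of the alternatives you sketch (integrality forced by the odd roots, character domination of $L(V)$ over $V_1$) is made precise, and indeed none of them works without further input.

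The paper sidesteps this by applying the translation argument not to $M$ but to $V\subset M=L(V)$. For a weight vector $v\in V$ of weight $(\lambda',\mu')$, the subspace $U(\gl_m)v$ has constant $\gl_n$-component $\mu'$; hence if it had $k$ distinct weights $\lambda'+\beta^{(1)},\ldots,\lambda'+\beta^{(k)}$, the translation produces $k$ linearly independent vectors in the single weight space $F(P,M)_{(\tilde\lambda+\lambda',\tilde\mu+\mu')}$, contradicting boundedness. Thus $U(\gl_m)v$ has finitely many weights and (each weight space being finite-dimensional) is itself finite-dimensional. One then takes $V_1$ to be a simple $\gl_m$-submodule of $U(\gl_m)v$, automatically finite-dimensional, and invokes Lemma~\ref{density}(2) to factor $V\cong V_1\otimes V_2$. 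The asymmetry between $V_1$ and $V_2$ that puzzled you is not a feature of the Kac module but of $\supp(P)$: the odd part $S$ is already finite, so the translation trick yields nothing new in the $\gl_n$-direction, and one obtains only that $V_2$ is bounded.
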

\begin{proof}
Clearly, $P$ is a simple weight $\mathcal K_{m,n}$-module with one-dimensional weight spaces. By Lemma \ref{Kweight}, $\supp(P)$ is of the form $X_1\times\dots\times X_m\times S$, where each $X_i\in\{\lambda^{(0)}_i+\Z,\Z_+,-\N\}$ for some $\lambda^{(0)}_i\in\C\setminus\Z$ and $S=\{k_1e_{1}+\dots+k_ne_{n}\ |\ k_1,\dots,k_n=0,1\}$.

Let $V_1$ be a finite-dimensional simple $\gl_m$-module and $V_2$ be a simple bounded $\gl_n$-module. Then there is a positive integer $N$ such that
$$\dim(V_2)_\mu\leqslant N,\forall\mu\in\supp(V_2).$$
The $\gl(m,n)$-module $K(V_1\otimes V_2)=\Lambda(\gl(m,n)_{-1})\otimes(V_1\otimes V_2)$. Note that $\Lambda(\gl(m,n)_{-1})\otimes V_1$ is a finite-dimensional weight module over the standard Cantan subalgebra of $\gl(m,n)$. Then $K(V_1\otimes V_2)$ has weight spaces with dimensions no more than
$$\dim \Lambda(\gl(m,n)_{-1})\cdot \dim V_1 \cdot N=2^{mn}N \dim V_1.$$
Moreover, $\supp(K(V_1\otimes V_2))\subset S_1\times (\mu+\Z^n)$ for some finite set $S_1\subset \C^m$ and some $\mu\in\supp(V_2)$. So $L(V_1\otimes V_2)$ has weight spaces with bounded municipality and $\supp(L(V_1\otimes V_2))\subset S_1\times (\mu+\Z^n)$.

For any given $(\lambda',\mu')\in\supp(F(P,L(V_1\otimes V_2)))$, there are at most $|S||S_1|$ pairs $(\lambda^{(1)},\mu^{(1)})\in X_1\times\dots\times X_m\times S,(\lambda^{(2)},\mu^{(2)})\in S_1\times (\mu+\Z^n)$ such that $(\lambda^{(1)},\mu^{(1)})+(\lambda^{(2)},\mu^{(2)})=(\lambda',\mu')$. Since any weight space of $P$ is one-dimensional and $L(V_1\otimes V_2)$ is bounded, $F(P,L(V_1\otimes V_2))$ is bounded. Clearly, $F(P,\Pi(L(V_1\otimes V_2)))$ is also bounded. Thus, the sufficiency is proved.

Now suppose that $F(P,M)$ is a bounded $AW$-module. By Lemma \ref{L(V)}, there is a simple weight $\gl(m,n)_0$-module $V$ such that $M$ is isomorphic to the $\gl(m,n)$-module $L(V)$ or $\Pi(L(V))$. Note that $F(P,\Pi(L(V)))\cong F(\Pi(P),L(V))$, where $\Pi(P)$ is also a simple weight $\mathcal K_{m,n}$-module. Without loss of generality, we assume that $M=L(V)$. If $m=0$, $V$ is a simple bounded $\gl_n$-module and $M=V\cong L(V_1,V)$, where $V_1$ is the one-dimensional trivial $\gl_0$-module. Suppose $m>0$.

Let $v$ be a nonzero weight vector of weight $(\lambda',\mu')$ in $V\subset L(V)$. Then $U(\gl_m)v$ is a weight $\gl_m$-submodule of $V$ with
$$E_{m+i,m+i}w=\mu'_iw,\forall i\in\{1,\dots,n\},w\in U(\gl_m)v.$$

We claim that $U(\gl_m)v$ is finite-dimensional. It's easy to see that any weight space of $U(\gl_m)v$ is finite-dimensional. For any positive integer $k$, suppose that $U(\gl_m)v$ has $k$ pairwise different weights $\lambda'+\beta^{(1)},\dots,\lambda'+\beta^{(k)}$ for some $\beta^{(1)},\dots,\beta^{(k)}\in\Z^m$. Let $v_1,\dots,v_k$ be nonzero weight vectors in $U(\gl_m)v$ of weight $\lambda'+\beta^{(1)},\dots,\lambda'+\beta^{(k)}$ respectively. Let $\tilde\lambda\in\C^m$ such that $\tilde\lambda-\beta^{(i)}\in X_1\times\dots\times X_m$ for all $i\in\{1,\dots,k\}$ and let $\tilde\mu\in S$. Then $(\tilde\lambda-\beta_1,\tilde\mu),\dots,(\tilde\lambda-\beta_k,\tilde\mu)$ are pairwise different weights of $P$. Let $w_1,\dots,w_k$ be nonzero weight vectors in $P$ of weight $(\tilde\lambda-\beta_1,\tilde\mu),\dots,(\tilde\lambda-\beta_k,\tilde\mu)$ respectively. Obviously, $w_1\otimes v_1,\dots,w_k\otimes v_k$ are weight vectors in $F(P,M)$ of weight $(\tilde\lambda+\lambda',\tilde\mu+\mu')$. So $\dim(F(P,M))_{(\tilde\lambda+\lambda',\tilde\mu+\mu')}\geqslant k$. Since $F(P,M)$ is a bounded $AW$-module, $U(\gl_m)v$ has only finite weights. Therefore, $U(\gl_m)v$ is finite-dimensional.

Let $V_1$ be a simple $\gl_m$-submodule of $U(\gl_m)v$, which is clearly also a finite-dimensional weight module. By Lemma \ref{density}(2), there is a simple $\gl_n$-module $V_2$ such that $V\cong V_1\otimes V_2$. $V_2$ is a weight module for $V$ is, and $V_2$ is a bounded module for $F(P,L(V))$ is. Thus, $M\cong L(V_1\otimes V_2)$ and the necessity is proved.
\end{proof}

\begin{theorem}\label{F(P,M)}
Suppose that $V$ is a simple weight $AW$-module with $\dim V_{(\lambda,\mu)}<\infty$ for some $(\lambda,\mu)\in\supp(V)$. Then $V$ is isomorphic to $F(P,M)$ for some simple weight $\mathcal K_{m,n}$-module $P$ and some simple weight $\gl(m,n)$-module $M$.
\end{theorem}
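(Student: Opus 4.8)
The statement asserts that a simple weight $AW$-module $V$ with at least one finite-dimensional weight space is a tensor module $F(P,M)$. Since $\bar U \cong \mathcal K_{m,n}\otimes U(\gl(m,n))$ via the isomorphism $\pi$ (composing $\pi_1^{-1}$, $\pi_2^{-1}$, $\pi_3^{-1}$ as above), the natural strategy is to apply the density-type Lemma \ref{density}(2) with $B=U(\gl(m,n))$ and $B'=\mathcal K_{m,n}$, after transporting the $AW$-action on $V$ to a $\mathcal K_{m,n}\otimes U(\gl(m,n))$-action through $\pi$. To invoke Lemma \ref{density}(2) I need to locate inside $V$ a \emph{strictly simple} $\mathcal K_{m,n}$-submodule (where $\mathcal K_{m,n} = \mathcal K_{m,n}\otimes\C$ sits inside $\bar U$). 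Then Lemma \ref{density}(2) hands me $V\cong M\otimes P$ for some simple $U(\gl(m,n))$-module $M$, i.e. $V\cong F(P,M)$; weight-ness of $P$ and $M$ follows from weight-ness of $V$ together with the explicit formulas $\pi(d_i)=d_i\otimes 1+1\otimes E_{i,i}$ and $\pi(\delta_j)=\delta_j\otimes 1 + 1\otimes E_{m+j,m+j}$, exactly as in the construction of $F(P,M)$.

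\textbf{Finding the strictly simple $\mathcal K_{m,n}$-submodule.} This is where the finite-dimensional weight space hypothesis enters. Viewing $V$ as a $\mathcal K_{m,n}$-module via $\pi|_{\mathcal K_{m,n}}$, it is a weight module (the Cartan $H$ of $W$ maps into $\mathcal K_{m,n}\otimes U(\gl)$ with the $\gl$-part acting locally finitely, but restricting to the subalgebra $A\Delta \subset W$ whose image is exactly $\mathcal K_{m,n}$-related, one sees the $t_i$-degree / $\xi$-degree grading is controlled). More carefully: the operators $\partial/\partial t_i$ and $\partial/\partial\xi_j$ act on $V$, and $\partial/\partial \xi_j$ is nilpotent on any finite-dimensional subspace; using a weight vector $v$ in the finite-dimensional weight space $V_{(\lambda,\mu)}$ one produces, as in the proof of Lemma \ref{Ksimple}, a vector killed by all $\partial/\partial\xi_j$ and then by a lowest-degree argument in the $t_i$ a vector generating (under $\mathcal K_{m,n}$) a weight $\mathcal K_{m,n}$-submodule $P_0\subseteq V$. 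By Lemma \ref{Kweight}(2), $P_0$ contains a simple weight $\mathcal K_{m,n}$-submodule $P$, and by Lemma \ref{Ksimple} that $P$ is strictly simple — which is precisely the input Lemma \ref{density}(2) requires.

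\textbf{The main obstacle.} The subtle point is not the algebra-isomorphism bookkeeping but ensuring that the $\mathcal K_{m,n}$-submodule of $V$ we extract is genuinely a \emph{weight} module for $\mathcal K_{m,n}$ in the sense of Lemma \ref{Kweight}, so that Lemma \ref{Kweight}(2) applies and gives a simple submodule with the prescribed support shape. The action of $\mathcal K_{m,n}$ on $V$ does not a priori have finite-dimensional weight spaces even though $V$ does, because a single $W$-weight space can decompose into infinitely many $\mathcal K_{m,n}$-weight spaces only if we are careless — in fact the relevant $\mathcal K_{m,n}$-weight operators differ from the $H$-action by the locally finite $\gl(m,n)$-part, so $\mathcal K_{m,n}$-weight spaces sit inside (finite unions of) $W$-weight spaces and the finiteness is inherited. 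Spelling out this comparison of gradings, and checking that the vector we construct lies in a single $\mathcal K_{m,n}$-weight space (not merely a single $H$-weight space), is the technical heart; once that is in place, Lemmas \ref{Ksimple}, \ref{Kweight}, and \ref{density} assemble the conclusion, and the identification $V\cong F(P,M)$ with $M$ and $P$ weight modules is immediate from the formulas for $\pi$.
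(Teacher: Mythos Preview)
There is a genuine gap. You write that $\bar U\cong \mathcal K_{m,n}\otimes U(\gl(m,n))$ via $\pi$, but $\pi$ is only a surjective \emph{homomorphism}, not an isomorphism: the step $U(\m\Delta)\to U(\m\Delta/\m^2\Delta)$ kills the (large) ideal generated by $\m^2\Delta$. What \emph{is} an isomorphism is $\rho:=\pi_1\circ(1\otimes\pi_2):\mathcal K_{m,n}\otimes U(\m\Delta)\to\bar U$. Hence you cannot apply Lemma~\ref{density}(2) with $B=U(\gl(m,n))$ directly; you must take $B=U(\m\Delta)$, obtain $V\cong P\otimes M$ with $M$ a simple $\m\Delta$-module, and only \emph{then} invoke Lemma~\ref{pi3}(2) to show that, because $M$ is a simple \emph{weight} $\m\Delta$-module, $\m^2\Delta$ annihilates it and it descends to a $\gl(m,n)$-module. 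Skipping this step is tantamount to assuming $\ker\pi$ annihilates $V$ without proof.

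Your treatment of the ``main obstacle'' also rests on an unjustified claim: you assert the $\gl(m,n)$-part acts locally finitely, but a priori only one $H$-weight space is known to be finite-dimensional. The paper's device is cleaner and avoids this. Set
\[
D=\span\Big\{X_{e_i,\varnothing,\frac{\partial}{\partial t_i}},\ X_{0,\{j\},\frac{\partial}{\partial\xi_j}},\ t_i\cdot\frac{\partial}{\partial t_i},\ \xi_j\cdot\frac{\partial}{\partial\xi_j}\Big\}\subset\bar U.
\]
This is abelian, and $\rho^{-1}(D)=\span\{d_i\otimes1,\ \delta_j\otimes1,\ 1\otimes d_i,\ 1\otimes\delta_j\}$ --- the two ``Cartans'' simultaneously. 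Since $D$ commutes with $H$, it preserves the finite-dimensional space $V_{(\lambda,\mu)}$ and therefore has a homogeneous common eigenvector $v$ there. Now $\mathcal K_{m,n}v$ is automatically a weight $\mathcal K_{m,n}$-module (as $v$ is already an eigenvector for $t_i\cdot\frac{\partial}{\partial t_i}$ and $\xi_j\cdot\frac{\partial}{\partial\xi_j}$), Lemma~\ref{Kweight}(2) supplies a simple submodule $P$, and Lemma~\ref{Ksimple} makes it strictly simple. After Lemma~\ref{density}(2) yields $V\cong P\otimes M$, the fact that $v$ is also an eigenvector for $1\otimes d_i,\ 1\otimes\delta_j$ forces $M$ to be a weight $\m\Delta$-module, and Lemma~\ref{pi3}(2) finishes.
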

\begin{proof}
Let $D=\span\{X_{e_i,\vn,\frac{\partial}{\partial t_i}},X_{0,\xi_j,\pxi{j}},t_i\cdot \frac{\partial}{\partial t_i},\xi_j\cdot\pxi{j}\ |\ i=1,\dots,m,j=1,\dots,n\}\subset\bar U$. Then $D$ is an abelian Lie super-subalgebra of $\bar U$ and $D\cdot V_{(\lambda,\mu)}\subset V_{(\lambda,\mu)}$. So $V_{(\lambda,\mu)}$ contains a homogeneous common eigenvector $v$ of $D$. Let $\rho:\mathcal K_{m,n}\otimes U(\m\Delta)\stackrel{1\otimes\pi_2}{\longrightarrow}\mathcal K_{m,n}\otimes U(T)\stackrel{\pi_1}{\longrightarrow}\bar U$ be the isomorphism of associative superalgebras. Then $V$ could be viewed as a simple $\mathcal K_{m,n}\otimes U(\m\Delta)$-module via $\rho$ and $v$ is a common eigenvector of $\rho^{-1}(D)=\span\{d_i\otimes 1,\delta_j\otimes 1,1\otimes d_i,1\otimes \delta_j\ |\ i=1,\dots,m,j=1,\dots,n\}$. It follows that $\mathcal K_{m,n}v$ is a weight $\mathcal K_{m,n}$-module. By Lemma \ref{Kweight}, $\mathcal K_{m,n}v$ has a simple submodule $P$. Then, from Lemma \ref{Ksimple} and Lemma \ref{density} (2), $V\cong P\otimes M$ for some simple $U(\m\Delta)$-module $M$. Since $\rho^{-1}(D)$ acts diagonally on $(\mathcal K_{m,n}\otimes U(\m\Delta))\cdot v=V$, $M$ is a simple weight $\m\Delta$-module. By Lemma \ref{pi3}, $M$ could be viewed as a simple weight $\gl(m,n)$-module. So $V$ is isomorphic to the simple $AW$-module $F(P,M)$.
\end{proof}

\section{Classification of Bounded Modules}

If $m\in\N$, for any
$$\alpha,\beta\in\Z_+^m,I,J\subset\{1,\dots,n\},r\in\N,j\in\{1,\dots,m\}, \partial,\partial'\in\{\frac{\partial}{\partial t_1},\dots,\frac{\partial}{\partial t_m},\frac{\partial}{\partial\xi_1},\dots,\frac{\partial}{\partial\xi_n}\},$$
define
\begin{equation}\omega_{\alpha,\beta,I,J}^{r,j,\partial,\partial'}=\sum_{i=0}^r(-1)^i\binom{r}{i}t^{\alpha+(r-i)e_j}\xi_I\partial\cdot t^{\beta+ie_j}\xi_J\partial'\in U(W).
\end{equation}
Then
\begin{equation}
\omega_{\alpha+e_j,\beta,I,J}^{r,j,\partial,\partial'}-\omega_{\alpha,\beta+e_j,I,J}^{r,j,\partial,\partial'}=\omega_{\alpha,\beta,I,J}^{r+1,j,\partial,\partial'}.
\end{equation}

\begin{lemma}\cite[Lemma 4.5]{XL1}\label{Wn+omega}
Suppose that $m\in\N$ and $M$ is a weight $W_{m,0}$-module with $\dim M_\lambda\leqslant N$ for all $\lambda\in \supp(M)$. Then there is an $r\in \N$ such that $\omega_{\alpha,\beta,\varnothing,\varnothing}^{r,j,\frac{\partial}{\partial t_i},\frac{\partial}{\partial t_i'}}\cdot M=0$ for all $\alpha,\beta\in\Z_+^m,i,i',j\in\{1,\dots,m\}$.
\end{lemma}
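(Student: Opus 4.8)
The plan is to recast the assertion as the vanishing of a finite difference and to pin down the required uniform bound by comparison with tensor modules. Fix $\alpha,\beta\in\Z_+^m$, indices $i,i',j$ and a weight $\lambda\in\supp(M)$, and abbreviate $\Omega_r:=\omega_{\alpha,\beta,\varnothing,\varnothing}^{r,j,\frac{\partial}{\partial t_i},\frac{\partial}{\partial t_{i'}}}$. The total exponent $\alpha+\beta+re_j$ of the two monomials occurring in $\Omega_r$ is independent of the summation index $\ell$, so every summand $t^{\alpha+(r-\ell)e_j}\frac{\partial}{\partial t_i}\cdot t^{\beta+\ell e_j}\frac{\partial}{\partial t_{i'}}$ carries $M_\lambda$ into the one weight space $M_{\lambda'}$, where $\lambda'=\lambda+\alpha+\beta+re_j-e_i-e_{i'}$. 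Letting $g_\ell\in\operatorname{Hom}_{\C}(M_\lambda,M_{\lambda'})$ be the restriction of that summand, we obtain $\Omega_r|_{M_\lambda}=\sum_{\ell=0}^{r}(-1)^\ell\binom{r}{\ell}g_\ell$, the $r$-th forward difference of the sequence $(g_\ell)$ inside a space of dimension at most $N^2$. Thus it suffices to exhibit $d\in\N$, independent of $\alpha,\beta,\lambda,i,i',j$, for which this difference vanishes as soon as $r\ge d$; then a single $r=d$ handles all $\alpha,\beta$ at once, and the recursion $\omega_{\alpha+e_j,\beta,\varnothing,\varnothing}^{r,j,\partial,\partial'}-\omega_{\alpha,\beta+e_j,\varnothing,\varnothing}^{r,j,\partial,\partial'}=\omega_{\alpha,\beta,\varnothing,\varnothing}^{r+1,j,\partial,\partial'}$ shows that every larger $r$ works as well.

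To get $d=3$ I would compute inside $\mathcal K_{m,0}\otimes U(\gl_m)$. Using the explicit formula for $\pi$ and keeping $\alpha+\beta+re_j$ fixed, the normal-ordered product $\pi\bigl(t^{\alpha+(r-\ell)e_j}\frac{\partial}{\partial t_i}\bigr)\,\pi\bigl(t^{\beta+\ell e_j}\frac{\partial}{\partial t_{i'}}\bigr)$ is a combination of finitely many fixed elements of $\mathcal K_{m,0}$ (of the shape $t^{\alpha+\beta+re_j}\frac{\partial}{\partial t_i}\frac{\partial}{\partial t_{i'}}$, $t^{\alpha+\beta+re_j-e_k}\frac{\partial}{\partial t_p}$, or $t^{\alpha+\beta+re_j-e_k-e_l}$), each multiplied by a scalar that is a polynomial in $\ell$ of degree at most $2$, tensored with a fixed element of $U(\gl_m)$ (one of $1$, $E_{k,i}$, $E_{l,i'}$, $E_{k,i}E_{l,i'}$); a quadratic term in $\ell$ arises only through the product of two coefficients each linear in $\ell$. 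Hence this product is a polynomial of degree $\le 2$ in $\ell$, so $\pi(\Omega_r)=0$ in $\mathcal K_{m,0}\otimes U(\gl_m)$ for every $r\ge 3$. It follows that $\Omega_r$ (for $r\ge 3$) annihilates every $AW$-module on which the $\bar U$-action factors through $\pi$; by Theorem \ref{F(P,M)} and Lemma \ref{pi3} this includes every simple bounded $AW$-module, hence every tensor module $F(P,V_1)$ with $P$ a simple weight $\mathcal K_{m,0}$-module and $V_1$ a finite-dimensional simple $\gl_m$-module (cf. Lemma \ref{cusF(P,M)}).

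It remains to descend to an arbitrary bounded $W_{m,0}$-module $M$, and this is the step I expect to be the real obstacle. I would pass to the $A$-cover $\widehat M$ of $M$ — the universal $AW$-module carrying a $W_{m,0}$-module epimorphism onto $M$ — and check that $\widehat M$ is again a weight module with uniformly bounded weight multiplicities; its $\bar U$-composition factors are then simple bounded $AW$-modules, so $\Omega_r$ kills each of them for $r\ge 3$. The difficulty is that an operator killing every composition factor need not kill the module, and indeed $M$ itself need not be a tensor module — for $m=1$, bounded highest-weight $W_{1,0}$-modules are bounded but not cuspidal — so the bound cannot simply be inherited from the tensor-module computation. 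The remedy is to use boundedness once more: the composition series of $\widehat M$ cuts each finite-dimensional weight space $\widehat M_\mu$ into a filtration of length at most the multiplicity bound of $\widehat M$, relative to which $\Omega_r$ acts strictly triangularly; enlarging $r$ by a quantity controlled only by that multiplicity bound and by $m$ — and invoking the recursion above, together with the $\gl_m$- and $\Delta$-actions to shift between weight spaces, to keep the enlarged $r$ independent of $\alpha,\beta$ — converts this triangularity into genuine vanishing on $\widehat M$, and hence on $M$. Carrying this last argument out precisely is where the care is needed; the remaining ingredients (the construction and boundedness of the $A$-cover, the bracket calculus in $W_{m,0}$, and the exact form of $\pi$) are routine.
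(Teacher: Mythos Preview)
Your approach has a fatal circularity. The paper does not prove this lemma at all: it is quoted from \cite{XL1} precisely because it is the seed from which the rest of Section~4 grows. In particular, the boundedness of the $A$-cover $\hat M$ (Theorem~\ref{hat}) is proved \emph{using} Lemma~\ref{omega}, whose proof in turn rests on Lemma~\ref{Wn+omega}. So when you write ``pass to the $A$-cover $\widehat M$ and check that $\widehat M$ is again a weight module with uniformly bounded weight multiplicities,'' you are invoking exactly the statement you are trying to establish. There is no independent route in this paper to the boundedness of $\hat M$; the relations (\ref{3.2}) that cut $W\otimes V$ down to $B\otimes V+X(V)$ are nothing other than the vanishing of the $\omega$'s.

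Your computation that $\pi(\Omega_r)=0$ in $\mathcal K_{m,0}\otimes U(\gl_m)$ for $r\ge 3$ is correct and is a pleasant observation, but it only tells you that $\Omega_r$ kills every $AW$-module that factors through $\pi$, i.e.\ every tensor module. The gap between ``kills every simple bounded $AW$-module'' and ``kills every bounded $W_{m,0}$-module'' is exactly the content of the lemma, and your proposed bridge (composition series of $\hat M$, strict triangularity, enlarging $r$) is both circular for the reason above and, even granting boundedness of $\hat M$, not a proof: you give no mechanism by which increasing $r$ by a fixed amount independent of $\alpha,\beta,\lambda$ annihilates an operator that is merely strictly upper-triangular on each weight space. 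The actual argument in \cite{XL1} is direct and does not pass through tensor modules or the $A$-cover; it extracts the polynomial-in-$\ell$ behaviour of the operators $t^{\alpha+(r-\ell)e_j}\partial_i\cdot t^{\beta+\ell e_j}\partial_{i'}$ on $M$ itself from the uniform bound $N$ on weight multiplicities, via a linear-algebra argument in the finite-dimensional $\operatorname{Hom}$-spaces between weight spaces.
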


\begin{lemma}\label{omega}
Suppose that $m\in\N$ and $M$ is a bounded $W$-module. Then there is an $r\in\N$ such that $\omega_{\alpha,\beta,I,J}^{r,j,\partial,\partial'}\cdot M=0$ for all
$$\alpha,\beta\in\Z_+^m,I,J\subset\{1,\dots,n\},j\in\{1,\dots,m\}, \partial,\partial'\in\{\frac{\partial}{\partial t_1},\dots,\frac{\partial}{\partial t_m},\frac{\partial}{\partial\xi_1},\dots,\frac{\partial}{\partial\xi_n}\}.$$
\end{lemma}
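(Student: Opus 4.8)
The plan is to deduce the lemma from the purely even statement, Lemma~\ref{Wn+omega}, and then to create the odd variables and the odd derivatives by taking commutators inside $U(W)$.

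First I would pass to $W_{m,0}$. Since the elements $t^\gamma\frac{\partial}{\partial t_i}$ of the subalgebra $W_{m,0}\subset W$ act trivially on the odd variables, $[\delta_j,W_{m,0}]=0$ for every $j$; hence, for each $\mu\in\C^n$, the subspace $M^{[\mu]}:=\bigoplus_{\lambda\in\C^m}M_{(\lambda,\mu)}$ is a $W_{m,0}$-submodule of $M$, and it is a weight module for $\span\{d_1,\dots,d_m\}$ whose weight spaces are the $M_{(\lambda,\mu)}$, so $\dim M^{[\mu]}_\lambda\le N$, where $N$ is a bound for $M$. The integer $r$ produced by Lemma~\ref{Wn+omega} may be taken to depend only on $m$ and $N$, so applying that lemma to all the $M^{[\mu]}$ at once yields a single $r_0$ with $\omega_{\alpha,\beta,\varnothing,\varnothing}^{r_0,j,\frac{\partial}{\partial t_i},\frac{\partial}{\partial t_{i'}}}\cdot M^{[\mu]}=0$ for every $\mu$; since $M=\bigoplus_\mu M^{[\mu]}$ as a $W_{m,0}$-module and this element lies in $U(W_{m,0})$, it follows that
$$\omega_{\alpha,\beta,\varnothing,\varnothing}^{r_0,j,\frac{\partial}{\partial t_i},\frac{\partial}{\partial t_{i'}}}\cdot M=0,\qquad\forall\,\alpha,\beta\in\Z_+^m,\ i,i',j\in\{1,\dots,m\}.$$

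Next I would bootstrap to arbitrary $I,J,\partial,\partial'$. For homogeneous $y\in W$ the operator $\ad(y)$ is a superderivation of $U(W)$, so $\ad(y)\bigl(\omega_{\alpha,\beta,I,J}^{r,j,\partial,\partial'}\bigr)$, computed by the Leibniz rule from the two factors of $\omega$, is a linear combination of elements $\omega_{\alpha',\beta',I',J'}^{r,j,\partial'',\partial'''}$ of the same shape, plus terms with superscript $r-1$ when $y$ involves $t_j$. Choosing $y=\xi_k\frac{\partial}{\partial t_l}$ with $l\ne j$ leaves the $e_j$-shift intact and the superscript at $r$, and inserts $\xi_k$ into the first, respectively the second, $\xi$-set; iterating over $k$ moves $(I,J)$ from $(\varnothing,\varnothing)$ to anything. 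Choosing then $y=t_l\frac{\partial}{\partial\xi_k}$, the bracket $[\,y,\ t^\gamma\xi_K\frac{\partial}{\partial t_l}\,]$ replaces $\frac{\partial}{\partial t_l}$ by $\frac{\partial}{\partial\xi_k}$ up to sign and a lower term, so performing this once for each factor, with two distinct $l$'s, lets $(\partial,\partial')$ become any pair in $\{\frac{\partial}{\partial t_1},\dots,\frac{\partial}{\partial t_m},\frac{\partial}{\partial\xi_1},\dots,\frac{\partial}{\partial\xi_n}\}$. At each step the $\omega$-elements fed in already annihilate $M$, so $\ad(y)$ of them does too (because $\ad(y)(\omega)\cdot v=y\cdot(\omega v)\pm\omega\cdot(yv)$), and hence the resulting combination of new $\omega$-elements annihilates $M$.

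The hard part will be solving these annihilation relations for the individual $\omega$-elements. The unwanted terms are either $\omega$-elements handled at an earlier stage — so one inducts on $|I|+|J|$, doing the derivation-conversion last — or $\omega$-elements of superscript $r-1$; the latter I would remove by first rewriting $\omega_{\alpha,\beta,I,J}^{r_0+1,j,\partial,\partial'}=\omega_{\alpha+e_j,\beta,I,J}^{r_0,j,\partial,\partial'}-\omega_{\alpha,\beta+e_j,I,J}^{r_0,j,\partial,\partial'}$ before applying $\ad(y)$, after which (by the same identity, now in superscript $r_0-1$) the superscript-$(r_0-1)$ contributions recombine into superscript-$r_0$ ones. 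Using a second independent commutator relation together with this identity one can then isolate each $\omega$-element after increasing $r$ by a bounded amount; the same device also handles the degenerate cases $m=1$, where $l\ne j$ or two distinct $l$'s are unavailable, and the collisions where a derivative index already lies in $I$ or $J$. Since there are only finitely many tuples $(j,\partial,\partial',I,J)$, taking the largest $r$ that occurs finishes the proof; the genuine obstacle is exactly this bookkeeping — ensuring that every cross term lands either in an earlier stage of the induction or in the reach of the telescoping identity.
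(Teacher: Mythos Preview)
Your reduction to the even case via Lemma~\ref{Wn+omega} and the decomposition $M=\bigoplus_\mu M^{[\mu]}$ is exactly how the paper begins, and the idea of bootstrapping by applying $\ad(y)$ to known annihilators is also the paper's strategy. The difference is in the choice of $y$, and that is where your argument has a genuine gap.

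You take $y=\xi_k\frac{\partial}{\partial t_l}$ with $l\ne j$ (and later $y=t_l\frac{\partial}{\partial\xi_k}$ with two distinct $l$'s), precisely so that the $e_j$-shift structure of $\omega$ is preserved and no $i$-dependent coefficients arise. But for $m=1$ there is \emph{no} index $l\ne j$, so none of these elements exist; your sentence ``the same device also handles the degenerate cases $m=1$'' is not a resolution, because the device in question literally has no ingredients when $m=1$. Even for $m\ge 2$, the isolation step is not carried out: applying $\ad(\xi_k\frac{\partial}{\partial t_l})$ to $\omega_{\alpha,\beta,\varnothing,\varnothing}^{r,j,\partial,\partial'}$ yields $\alpha_l\,\omega_{\alpha-e_l,\beta,\{k\},\varnothing}^{r,j,\partial,\partial'}+\beta_l\,\omega_{\alpha,\beta-e_l,\varnothing,\{k\}}^{r,j,\partial,\partial'}$, and setting $\beta_l=0$ only yields the first family for $\beta$ with $\beta_l=0$, not for all $\beta$; the ``second independent commutator relation'' you invoke is never specified, and I do not see how to produce one from your toolkit without bringing the index $j$ back into play.

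The paper instead takes $y=t^\gamma\xi_I\frac{\partial}{\partial t_j}$ (and later $y=t^\gamma\partial$), deliberately using the \emph{same} index $j$. This makes the bracket coefficients $i$-dependent (linear in $i$), but the paper then forms iterated finite differences in the free parameters $\alpha,\beta,\gamma$; combined with the telescoping identity $\omega^{r+1}=\omega^r_{\alpha+e_j}-\omega^r_{\alpha,\beta+e_j}$, these differences collapse to a single $\omega$-element with superscript $r+2$ (and eventually $r+4$). Because only the index $j$ is used, this works uniformly for all $m\ge 1$, and the isolation is explicit at every stage. If you want to salvage your approach you will have to do something equivalent for $m=1$, at which point you are essentially reproducing the paper's computation.
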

\begin{proof}
Since $M$ is bounded, there is an $N\in\N$ such that $\dim M_{(\lambda,\mu)}\leqslant N$ for all $(\lambda,\mu)\in\supp(M)$. For any $(\lambda,\mu)\in\supp(M)$, $M_{((\lambda+\Z^m),\mu)}:=\oplus_{\alpha\in\Z^m}M_{((\lambda+\alpha),\mu)}$ is a weight $W_{m,0}$-module, the dimensions of whose weight spaces are uniformly bounded by $N$. By Lemma \ref{Wn+omega}, there is an $r\in \N$ such that $\omega_{\alpha,\beta,\varnothing,\varnothing}^{r,j,\frac{\partial}{\partial t_i},\frac{\partial}{\partial t_i'}}\cdot M_{((\lambda+\alpha),\mu)}=0$ for all $\alpha,\beta\in\Z_+^m,i,i',j\in\{1,\dots,m\}$. It follows that
$$\omega_{\alpha,\beta,\varnothing,\varnothing}^{r,j,\frac{\partial}{\partial t_i},\frac{\partial}{\partial t_i'}}\cdot M=0,\forall \alpha,\beta\in\Z_+^m,i,i',j\in\{1,\dots,m\}.$$

For any $\alpha,\beta,\gamma\in\Z_+^m,I\subset\{1,\dots,n\},j\in\{1,\dots,m\}$, we have
\begin{equation}\begin{split}
&[\omega_{\alpha,\beta,\varnothing,\varnothing}^{r,j,\frac{\partial}{\partial t_j},\frac{\partial}{\partial t_j}},t^\gamma\xi_I\frac{\partial}{\partial t_j}]\\
=&\sum_{i=0}^r(-1)^i\binom{r}{i}[t^{\alpha+(r-i)e_j}\frac{\partial}{\partial t_j},t^\gamma\xi_I\frac{\partial}{\partial t_j}]\cdot t^{\beta+ie_j}\frac{\partial}{\partial t_j}\\
&+\sum_{i=0}^r(-1)^i\binom{r}{i}t^{\alpha+(r-i)e_j}\frac{\partial}{\partial t_j}\cdot[t^{\beta+ie_j}\frac{\partial}{\partial t_j},t^\gamma\xi_I\frac{\partial}{\partial t_j}]\\
=&\sum_{i=0}^r(-1)^i\binom{r}{i}(\gamma_jt^{\alpha+\gamma+(r-i-1)e_j}\xi_I\frac{\partial}{\partial t_j}-(\alpha_j+r-i)t^{\alpha+\gamma+(r-i-1)e_j}\xi_I\frac{\partial}{\partial t_j})\cdot t^{\beta+ie_j}\frac{\partial}{\partial t_j}\\
&+\sum_{i=0}^r(-1)^i\binom{r}{i}t^{\alpha+(r-i)e_j}\frac{\partial}{\partial t_j}\cdot(\gamma_jt^{\beta+\gamma+(i-1)e_j}\xi_I\frac{\partial}{\partial t_j}-(\beta_j+i)t^{\beta+\gamma+(i-1)e_j}\xi_I\frac{\partial}{\partial t_j}).
\end{split}\end{equation}
Then
\begin{equation}\begin{split}
&f(\alpha,\beta,\gamma):
=[\omega_{\alpha+e_j,\beta,\varnothing,\varnothing}^{r,j,\frac{\partial}{\partial t_j},\frac{\partial}{\partial t_j}},t^\gamma\xi_I\frac{\partial}{\partial t_j}]
-[\omega_{\alpha,\beta,\varnothing,\varnothing}^{r,j,\frac{\partial}{\partial t_j},\frac{\partial}{\partial t_j}},t^{\gamma+e_j}\xi_I\frac{\partial}{\partial t_j}]\\
=&-2\sum_{i=0}^r(-1)^i\binom{r}{i}t^{\alpha+\gamma+(r-i)e_j}\xi_I\frac{\partial}{\partial t_j}\cdot t^{\beta+ie_j}\frac{\partial}{\partial t_j}\\
&+\sum_{i=0}^r(-1)^i\binom{r}{i}t^{\alpha+(r-i+1)e_j}\frac{\partial}{\partial t_j}\cdot(\gamma_jt^{\beta+\gamma+(i-1)e_j}\xi_I\frac{\partial}{\partial t_j}-(\beta_j+i)t^{\beta+\gamma+(i-1)e_j}\xi_I\frac{\partial}{\partial t_j})\\
&-\sum_{i=0}^r(-1)^i\binom{r}{i}t^{\alpha+(r-i)e_j}\frac{\partial}{\partial t_j}\cdot((\gamma_j+1)t^{\beta+\gamma+ie_j}\xi_I\frac{\partial}{\partial t_j}-(\beta_j+i)t^{\beta+\gamma+ie_j}\xi_I\frac{\partial}{\partial t_j})\in\ann(M).
\end{split}\end{equation}
We have
\begin{equation}\begin{split}
&f(\alpha+e_j,\beta,\gamma)-f(\alpha,\beta,\gamma+e_j)\\
=&\sum_{i=0}^r(-1)^i\binom{r}{i}t^{\alpha+(r-i+2)e_j}\frac{\partial}{\partial t_j}\cdot(\gamma_j-\beta_j-i)t^{\beta+\gamma+(i-1)e_j}\xi_I\frac{\partial}{\partial t_j}\\
&-2\sum_{i=0}^r(-1)^i\binom{r}{i}t^{\alpha+(r-i+1)e_j}\frac{\partial}{\partial t_j}\cdot(\gamma_j-\beta_j-i+1)t^{\beta+\gamma+ie_j}\xi_I\frac{\partial}{\partial t_j}\\
&+\sum_{i=0}^r(-1)^i\binom{r}{i}t^{\alpha+(r-i)e_j}\frac{\partial}{\partial t_j}\cdot(\gamma_j-\beta_j-i+2)t^{\beta+\gamma+(i+1)e_j}\xi_I\frac{\partial}{\partial t_j}.
\end{split}\end{equation}
Then
\begin{equation}\begin{split}
&(f(\alpha+e_j,\beta+e_j,\gamma)-f(\alpha,\beta+e_j,\gamma+e_j))-(f(\alpha+e_j,\beta,\gamma+e_j)-f(\alpha,\beta,\gamma+2e_j))\\
=&-2\omega_{\alpha+2e_j,\beta+\gamma,\varnothing,I}^{r,j,\frac{\partial}{\partial t_j},\frac{\partial}{\partial t_j}}+4\omega_{\alpha+e_j,\beta+\gamma+e_j,\varnothing,I}^{r,j,\frac{\partial}{\partial t_j},\frac{\partial}{\partial t_j}} -2\omega_{\alpha,\beta+\gamma+2e_j,\vn,I}^{r,j,\frac{\partial}{\partial t_j},\frac{\partial}{\partial t_j}}\\
=&-2\omega_{\alpha+e_j,\beta+\gamma,\vn,I}^{r+1,j,\frac{\partial}{\partial t_j},\frac{\partial}{\partial t_j}}+2\omega_{\alpha,\beta+\gamma+e_j,\vn,I}^{r+1,j,\frac{\partial}{\partial t_j},\frac{\partial}{\partial t_j}}\\
=&-2\omega_{\alpha,\beta+\gamma,\vn,I}^{r+2,j,\frac{\partial}{\partial t_j},\frac{\partial}{\partial t_j}}\in\ann(M).
\end{split}\end{equation}
So
\begin{equation}\label{omega1}\omega_{\alpha,\beta,\vn,I}^{r+2,j,\frac{\partial}{\partial t_j},\frac{\partial}{\partial t_j}}\in\ann(M)\end{equation}
for all $\alpha,\beta\in\Z_+^m,I\subset\{1,\dots,n\},j\in\{1,\dots,m\}$.

Let $J\subset\{1,\dots,n\}$. We have
\begin{equation}\begin{split}
&[\omega_{\alpha,\beta,\vn,I}^{r+2,j,\frac{\partial}{\partial t_j},\frac{\partial}{\partial t_j}},t^\gamma\xi_J\frac{\partial}{\partial t_j}]\\
=&\sum_{i=0}^{r+2}(-1)^i\binom{r+2}{i}(-1)^{|I||J|}[t^{\alpha+(r+2-i)e_j}\frac{\partial}{\partial t_j},t^\gamma\xi_J\frac{\partial}{\partial t_j}]\cdot t^{\beta+ie_j}\xi_I\frac{\partial}{\partial t_j}\\
&+\sum_{i=0}^{r+2}(-1)^i\binom{r+2}{i}t^{\alpha+(r+2-i)e_j}\frac{\partial}{\partial t_j}\cdot[t^{\beta+ie_j}\xi_I\frac{\partial}{\partial t_j},t^\gamma\xi_J\frac{\partial}{\partial t_j}]\\
=&\sum_{i=0}^{r+2}(-1)^i\binom{r+2}{i}(-1)^{|I||J|}(\gamma_j-\alpha_j-r-2+i)t^{\alpha+\gamma+(r+1-i)e_j}\xi_J\frac{\partial}{\partial t_j}\cdot t^{\beta+ie_j}\xi_I\frac{\partial}{\partial t_j}\\
&+\sum_{i=0}^{r+2}(-1)^i\binom{r+2}{i}(\gamma_j-\beta_j-i)t^{\alpha+(r+2-i)e_j}\frac{\partial}{\partial t_j}\cdot t^{\beta+\gamma+(i-1)e_j}\xi_I\xi_J\frac{\partial}{\partial t_j}.
\end{split}\end{equation}
From (\ref{omega1}) we have
$$h(\alpha,\beta,\gamma):=\sum_{i=0}^{r+2}(-1)^i\binom{r+2}{i}(\gamma_j-\alpha_j-r-2+i)t^{\alpha+\gamma+(r+1-i)e_j}\xi_J\frac{\partial}{\partial t_j}\cdot t^{\beta+ie_j}\xi_I\frac{\partial}{\partial t_j}\in\ann(M).$$
Then
$$h(\alpha+e_j,\beta,\gamma)-h(\alpha,\beta,\gamma+e_j)=-2\omega_{\alpha+\gamma,\beta,J,I}^{r+2,j,\frac{\partial}{\partial t_j},\frac{\partial}{\partial t_j}}\in\ann(M).$$
So
\begin{equation}\label{omega2}
\omega_{\alpha,\beta,J,I}^{r+2,j,\frac{\partial}{\partial t_j},\frac{\partial}{\partial t_j}}\in\ann(M)
\end{equation}
for all $\alpha,\beta\in\Z_+^m,I,J\subset\{1,\dots,n\},j\in\{1,\dots,m\}$.

Let $\partial\in\{\frac{\partial}{\partial t_1},\dots,\frac{\partial}{\partial t_m},\frac{\partial}{\partial\xi_1},\dots,\frac{\partial}{\partial\xi_n}\}\setminus\{\frac{\partial}{\partial t_j}\}$. We have
\begin{equation}\begin{split}
&[\omega_{\alpha,\beta,J,I}^{r+2,j,\frac{\partial}{\partial t_j},\frac{\partial}{\partial t_j}},t^\gamma\partial]\\
=&\sum_{i=0}^{r+2}(-1)^i\binom{r+2}{i}(-1)^{|I||\partial|}(\gamma_jt^{\alpha+\gamma+(r+1-i)e_j}\xi_J\partial-(-1)^{|J||\partial|}t^{\gamma+(r+2-i)e_j}\partial(t^\alpha\xi_J)\frac{\partial}{\partial t_j})\cdot t^{\beta+ie_j}\xi_I\frac{\partial}{\partial t_j}\\
&+\sum_{i=0}^{r+2}(-1)^i\binom{r+2}{i}t^{\alpha+(r+2-i)e_j}\xi_J\frac{\partial}{\partial t_j}\cdot(\gamma_jt^{\beta+\gamma+(i-1)e_j}\xi_I\partial
-(-1)^{|I||\partial|}t^{\gamma+ie_j}\partial(t^\beta\xi_I)\frac{\partial}{\partial t_j}).
\end{split}\end{equation}
By (\ref{omega2}),
\begin{equation}\begin{split}
&x(\alpha,\beta,\gamma):=
\sum_{i=0}^{r+2}(-1)^i\binom{r+2}{i}(-1)^{|I||\partial|}\gamma_jt^{\alpha+\gamma+(r+1-i)e_j}\xi_J\partial\cdot t^{\beta+ie_j}\xi_I\frac{\partial}{\partial t_j}\\
&+\sum_{i=0}^{r+2}(-1)^i\binom{r+2}{i}\gamma_jt^{\alpha+(r+2-i)e_j}\xi_J\frac{\partial}{\partial t_j}\cdot t^{\beta+\gamma+(i-1)e_j}\xi_I\partial\in\ann(M).\\
\end{split}\end{equation}
Then
\begin{equation}\begin{split}
&x(\alpha+e_j,\beta,\gamma)-x(\alpha,\beta,\gamma+e_j)\\
=&-\sum_{i=0}^{r+2}(-1)^i\binom{r+2}{i}(-1)^{|I||\partial|}t^{\alpha+\gamma+(r+2-i)e_j}\xi_J\partial\cdot t^{\beta+ie_j}\xi_I\frac{\partial}{\partial t_j}\\
&+\sum_{i=0}^{r+2}(-1)^i\binom{r+2}{i}\gamma_jt^{\alpha+(r+3-i)e_j}\xi_J\frac{\partial}{\partial t_j}\cdot t^{\beta+\gamma+(i-1)e_j}\xi_I\partial\\
&-\sum_{i=0}^{r+2}(-1)^i\binom{r+2}{i}(\gamma_j+1)t^{\alpha+(r+2-i)e_j}\xi_J\frac{\partial}{\partial t_j}\cdot t^{\beta+\gamma+ie_j}\xi_I\partial.
\end{split}\end{equation}
Thus,
\begin{equation}\begin{split}
&y(\alpha,\beta,\gamma):=(x(\alpha+2e_j,\beta,\gamma)-x(\alpha+e_j,\beta,\gamma+e_j))\\
&-(x(\alpha+e_j,\beta,\gamma+e_j)-x(\alpha,\beta,\gamma+2e_j))\\
=&\sum_{i=0}^{r+2}(-1)^i\binom{r+2}{i}\gamma_jt^{\alpha+(r+4-i)e_j}\xi_J\frac{\partial}{\partial t_j}\cdot t^{\beta+\gamma+(i-1)e_j}\xi_I\partial\\
&-2\sum_{i=0}^{r+2}(-1)^i\binom{r+2}{i}(\gamma_j+1)t^{\alpha+(r+3-i)e_j}\xi_J\frac{\partial}{\partial t_j}\cdot t^{\beta+\gamma+ie_j}\xi_I\partial\\
&+\sum_{i=0}^{r+2}(-1)^i\binom{r+2}{i}(\gamma_j+2)t^{\alpha+(r+2-i)e_j}\xi_J\frac{\partial}{\partial t_j}\cdot t^{\beta+\gamma+(i+1)e_j}\xi_I\partial\in\ann(M).\\
\end{split}\end{equation}
We have
\begin{equation}\begin{split}
&y(\alpha,\beta+e_j,\gamma)-y(\alpha,\beta,\gamma+e_j)\\
=&-\omega_{\alpha+2e_j,\beta+\gamma,J,I}^{r+2,j,\frac{\partial}{\partial t_j},\partial}+2\omega_{\alpha+e_j,\beta+\gamma+e_j,J,I}^{r+2,j,\frac{\partial}{\partial t_j},\partial}-\omega_{\alpha,\beta+\gamma+2e_j,J,I}^{r+2,j,\frac{\partial}{\partial t_j},\partial}\\
=&-\omega_{\alpha+e_j,\beta+\gamma,J,I}^{r+3,j,\frac{\partial}{\partial t_j},\partial}+\omega_{\alpha,\beta+\gamma+e_j,J,I}^{r+3,j,\frac{\partial}{\partial t_j},\partial}\\
=&-\omega_{\alpha,\beta+\gamma,J,I}^{r+4,j,\frac{\partial}{\partial t_j},\partial}\in\ann(M).
\end{split}\end{equation}
So
\begin{equation}\label{omega3}
\omega_{\alpha,\beta,J,I}^{r+4,j,\frac{\partial}{\partial t_j},\partial}\in\ann(M)
\end{equation}
for all $\alpha,\beta\in\Z_+^m,I,J\subset\{1,\dots,n\},j\in\{1,\dots,m\},\partial\in\{\frac{\partial}{\partial t_1},\dots,\frac{\partial}{\partial t_m},\frac{\partial}{\partial\xi_1},\dots,\frac{\partial}{\partial\xi_n}\}\setminus\{\frac{\partial}{\partial t_j}\}$.
Similarly,
\begin{equation}\label{omega4}
\omega_{\alpha,\beta,J,I}^{r+4,j,\partial,\frac{\partial}{\partial t_j}}\in\ann(M).
\end{equation}

Let $\partial'\in\{\frac{\partial}{\partial t_1},\dots,\frac{\partial}{\partial t_m},\frac{\partial}{\partial\xi_1},\dots,\frac{\partial}{\partial\xi_n}\}\setminus\{\frac{\partial}{\partial t_j}\}$. We have
\begin{equation}\begin{split}
&[\omega_{\alpha,\beta,J,I}^{r+4,j,\frac{\partial}{\partial t_j},\partial},t^\gamma\partial']\\
=&\sum_{i=0}^{r+4}(-1)^i\binom{r+4}{i}(-1)^{|\xi_I\partial||\partial'|}(\gamma_jt^{\alpha+\gamma+(r+3-i)e_j}\xi_J\partial'\\
&-(-1)^{|J||\partial'|}t^{\gamma+(r+4-i)e_j}\partial'(t^\alpha\xi_J)\frac{\partial}{\partial t_j})\cdot t^{\beta+ie_j}\xi_I\partial\\
&+\sum_{i=0}^{r+4}(-1)^i\binom{r+4}{i}t^{\alpha+(r+4-i)e_j}\xi_J\frac{\partial}{\partial t_j}\cdot(t^{\beta+ie_j}\partial(t^\gamma\xi_I)\partial'-(-1)^{|\xi_I\partial||\partial'|}t^{\gamma+ie_j}\partial'(t^\beta\xi_I)\partial).
\end{split}\end{equation}
By (\ref{omega3}),
$$\sum_{i=0}^{r+4}(-1)^i\binom{r+4}{i}\gamma_jt^{\alpha+\gamma+(r+3-i)e_j}\xi_J\partial'\cdot t^{\beta+ie_j}\xi_I\partial\in\ann(M).$$
Taking $\gamma=e_j$, we have
\begin{equation}\label{omega5}
\omega_{\alpha,\beta,J,I}^{r+4,j,\partial',\partial}\in\ann(M)
\end{equation}
for all $\alpha,\beta\in\Z_+^m,I,J\subset\{1,\dots,n\},j\in\{1,\dots,m\},\partial,\partial'\in\{\frac{\partial}{\partial t_1},\dots,\frac{\partial}{\partial t_m},\frac{\partial}{\partial\xi_1},\dots,\frac{\partial}{\partial\xi_n}\}\setminus\{\frac{\partial}{\partial t_j}\}$.

The lemma follows from (\ref{omega2}), (\ref{omega3}), (\ref{omega4}) and (\ref{omega5}) after replacing $r+4$ with $r$.
\end{proof}

Let $V$ be a weight $W$-module. Then $W\otimes V$ is a weight $W$-module with
$$(W\otimes V)_{(\lambda,\mu)}=\oplus_{\alpha\in\Z^m,\beta\in\Z^n}(W)_{(\alpha,\beta)}\otimes V_{((\lambda-\alpha),(\mu-\beta))},\forall \lambda\in\C^m,\mu\in\C^n.$$
Define the action of $A$ on $W\otimes V$ by
$$a\cdot(x\otimes v)=(ax)\otimes v,\forall a\in A,x\in W,v\in V.$$
It is easy to verify that $W\otimes V$ now is an $AW$-module. Define a linear map $\theta:W\otimes V\rightarrow V$ by
$$\theta(x\otimes v)=x\cdot v,\forall x\in W,v\in V.$$
Then $\theta$ is a $W$-module homomorphism. Let $X(V)=\{x\in\rm{Ker} \theta\ |\ A\cdot x\subset \rm{Ker}\theta\}$. Clearly, $X(V)$ is an $AW$-submodule of $W\otimes V$. The $AW$-module $\hat V=(W\otimes V)/X(V)$ is called the $A$-cover of $V$ and $\theta$ induces a $W$-module homomorphism $\hat \theta:\hat V\rightarrow V$.

\begin{theorem}\label{hat}
Suppose that $V$ is a simple bounded $W$-module. Then $\hat V$ is a bounded $AW$-module.
\end{theorem}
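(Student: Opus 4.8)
The plan is to pin down an explicit \emph{finite-dimensional} $H$-submodule $S\subset W$ and to prove that $W\otimes V=S\otimes V+X(V)$; since $\hat V=(W\otimes V)/X(V)$ is then a quotient of the image of $S\otimes V$, and $S\otimes V$ visibly has uniformly bounded weight spaces once $V$ does, boundedness of $\hat V$ follows. First I would dispose of two degenerate cases. If $V$ is a trivial $W$-module then $\theta=0$, hence $X(V)=W\otimes V$ and $\hat V=0$ is (trivially) bounded; so we may assume $V$ is non-trivial, and then $W\cdot V=V$ by simplicity. If $m=0$ then $W=W_{0,n}$ is finite-dimensional with $\supp(W)\subset\{0,\pm1\}^{n}$, so $\dim(W\otimes V)_{(\lambda,\mu)}\leqslant(\dim W)N$ whenever $\dim V_{(\lambda,\mu)}\leqslant N$ for all weights, and its quotient $\hat V$ is bounded. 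From now on assume $m\in\N$, fix $N$ with $\dim V_{(\lambda,\mu)}\leqslant N$ for all $(\lambda,\mu)\in\supp(V)$, and invoke Lemma \ref{omega} to get $r\in\N$ with $\omega_{\alpha,\beta,I,J}^{r,j,\partial,\partial'}\cdot V=0$ for all admissible $\alpha,\beta,I,J,j,\partial,\partial'$.

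The first substantive step is to upgrade this annihilation statement into a statement about $X(V)$: for every $v\in V$ and all admissible indices the vector $E:=\sum_{i=0}^{r}(-1)^{i}\binom{r}{i}\,t^{\alpha+(r-i)e_{j}}\xi_{I}\partial\otimes t^{\beta+ie_{j}}\xi_{J}\partial' v\in W\otimes V$ lies in $X(V)$. Indeed $\theta(E)=\omega_{\alpha,\beta,I,J}^{r,j,\partial,\partial'}\cdot v=0$, so $E\in\mathrm{Ker}\,\theta$; and to check $A\cdot E\subset\mathrm{Ker}\,\theta$ it suffices to test against the monomials $t^{\gamma}\xi_{K}$ spanning $A$. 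Here the key point is that $t^{\gamma}\xi_{K}\cdot t^{\alpha+(r-i)e_{j}}\xi_{I}\partial$ equals $0$ when $K\cap I\neq\varnothing$, and otherwise equals $t^{\gamma+\alpha+(r-i)e_{j}}\xi_{K\cup I}\partial$ up to a sign that depends only on $K$ and $I$ (it comes solely from reordering $\xi_{K}\xi_{I}$); so $t^{\gamma}\xi_{K}\cdot E$ is, up to this uniform sign, the vector $\sum_{i=0}^{r}(-1)^{i}\binom{r}{i}\,t^{(\gamma+\alpha)+(r-i)e_{j}}\xi_{K\cup I}\partial\otimes t^{\beta+ie_{j}}\xi_{J}\partial' v$, whose $\theta$-image is $\pm\,\omega_{\gamma+\alpha,\beta,K\cup I,J}^{r,j,\partial,\partial'}\cdot v=0$ by Lemma \ref{omega} again. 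Hence $E\in X(V)$.

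Next I would take $S=\span\{\,t^{\gamma}\xi_{I}\partial\ |\ \gamma\in\Z_+^m,\ \gamma_{i}\leqslant r\ (1\leqslant i\leqslant m),\ I\subset\{1,\dots,n\},\ \partial\in\{\frac{\partial}{\partial t_{1}},\dots,\frac{\partial}{\partial t_{m}},\pxi{1},\dots,\pxi{n}\}\,\}$, a finite-dimensional $H$-submodule of $W$; then $S\otimes V$ is an $H$-submodule of $W\otimes V$ with $\dim(S\otimes V)_{(\lambda,\mu)}\leqslant(\dim S)N$ for all weights. The heart of the argument is the claim $W\otimes V=S\otimes V+X(V)$. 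Since $W\cdot V=V$, every element of $W\otimes V$ is a sum of terms $t^{\alpha}\xi_{I}\partial\otimes t^{\beta}\xi_{J}\partial' v$, so it is enough to show each such term lies in $S\otimes V+X(V)$, which I would do by induction on $|\alpha|=\alpha_{1}+\dots+\alpha_{m}$. If $\alpha_{i}\leqslant r$ for every $i$, then $t^{\alpha}\xi_{I}\partial\in S$ and there is nothing to prove. Otherwise $\alpha_{j}\geqslant r+1$ for some $j$; substituting $\alpha-re_{j}$ for $\alpha$ in the membership $E\in X(V)$ from the previous paragraph (legitimate since $\alpha-re_{j}\in\Z_+^m$) and isolating the $i=0$ term gives $t^{\alpha}\xi_{I}\partial\otimes t^{\beta}\xi_{J}\partial' v\equiv-\sum_{i=1}^{r}(-1)^{i}\binom{r}{i}\,t^{\alpha-ie_{j}}\xi_{I}\partial\otimes t^{\beta+ie_{j}}\xi_{J}\partial' v\pmod{X(V)}$, and each summand on the right is again of the form (monomial of $W$) $\otimes$ (monomial operator applied to a vector), with $|\alpha-ie_{j}|=|\alpha|-i<|\alpha|$, hence lies in $S\otimes V+X(V)$ by the induction hypothesis. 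This proves the claim, so $\hat V$ is a quotient of $S\otimes V$ and $\dim\hat V_{(\lambda,\mu)}\leqslant(\dim S)N$ for all weights.

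The hard part is really the second paragraph: showing that the explicit $\omega$-vectors land in $X(V)$, not merely in $\mathrm{Ker}\,\theta$ — this is exactly the difference between the $A$-cover $\hat V$ and the naive cokernel of $\theta$ — and keeping careful track of which shift $\omega_{\gamma+\alpha,\beta,K\cup I,J}^{r,j,\partial,\partial'}$ one lands on after multiplying by a monomial of $A$. Everything else is bookkeeping: the induction on $|\alpha|$ together with the elementary dimension count, the one genuinely nontrivial analytic input — the existence of a uniform $r$ annihilating all the $\omega$'s — being supplied by Lemma \ref{omega}.
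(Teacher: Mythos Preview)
Your proof is correct and follows essentially the same approach as the paper: dispose of the trivial and $m=0$ cases, use Lemma~\ref{omega} to show the $\omega$-vectors lie in $X(V)$, and then induct on $|\alpha|$ to obtain $W\otimes V=S\otimes V+X(V)$ for the finite-dimensional $H$-submodule $S$ you describe (the paper calls it $B$). In fact your second paragraph supplies a detail the paper suppresses, namely the verification that $A\cdot E\subset\mathrm{Ker}\,\theta$ via the uniform sign on $\xi_K\xi_I$.
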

\begin{proof}
This is obvious if $V$ is trivial. Suppose that $V$ is not trivial, then $W\cdot V=V$ by the simplicity of $V$.

If $m=0$, $W$ is finite-dimensional. Then the $AW$-module $W\otimes V$ is bounded for $V$ is. So $\hat V$ is bounded.

Suppose $m\in\N$. Let $(\lambda,\mu)\in\supp(V)$, then $\supp(V)\subset(\lambda,\mu)+\Z^{m+n}$.
Since $V$ is bounded, there is a $k\in\N$ such that
$$V_{(\lambda',\mu')}\leqslant k,\forall (\lambda',\mu')\in\supp(V),$$
and there is an $r\in\N$ such that $\omega_{\alpha,\beta,I,J}^{r,j,\partial,\partial'}\cdot V=0$ for all $\alpha,\beta\in\Z_+^m,I,J\subset\{1,\dots,n\},j\in\{1,\dots,m\}, \partial,\partial'\in\{\frac{\partial}{\partial t_1},\dots,\frac{\partial}{\partial t_m},\frac{\partial}{\partial\xi_1},\dots,\frac{\partial}{\partial\xi_n}\}$ by Lemma \ref{omega}. Then we have
\begin{equation}\label{3.2}
\sum_{i=0}^r(-1)^i\binom{r}{i}t^{\alpha+(r-i)e_j}\xi_I\partial\otimes t^{\beta+ie_j}\xi_J\partial'v\in X(V),\forall v\in V.
\end{equation}

Let
\begin{eqnarray*}
B=\span\{t^\alpha\xi_I\partial\ |\ \alpha\in\Z_+^m,\alpha_i\leqslant r,i=1,\dots,m,\\
I\subset\{1,\dots,n\},\partial\in\{\frac{\partial}{\partial t_1},\dots,\frac{\partial}{\partial t_m},\frac{\partial}{\partial\xi_1},\dots,\frac{\partial}{\partial\xi_n}\}\}
\end{eqnarray*}
be a subspace of $W$. Then $B$ is a finite-dimensional $H$-submodule of $W$. So $B\otimes V$ is a $H$-submodule of $W\otimes V$ with
$$\dim (B\otimes V)_{(\lambda',\mu')}\leqslant \dim B\cdot k,\forall \lambda'\in\lambda+\Z^m,\mu'\in\mu+\Z^n.$$

To prove that $\hat V$ is a bounded $AW$-module, we need only to prove that $W\otimes V=B\otimes V+X(V)$. Since $W\cdot V=V$, it suffices to prove that
\begin{eqnarray*}
&t^\alpha\xi_I\partial\otimes t^\beta\xi_J\partial'v\in B\otimes V+X(V),\forall \alpha,\beta\in\Z_+^m,I,J\subset\{1,\dots,n\},\\
&\partial,\partial'\in\{\frac{\partial}{\partial t_1},\dots,\frac{\partial}{\partial t_m},\frac{\partial}{\partial\xi_1},\dots,\frac{\partial}{\partial\xi_n}\},v\in V.
\end{eqnarray*}
We will prove this by induction on $|\alpha|=\alpha_1+\dots+\alpha_m$. This is clear if $|\alpha|\leqslant r$ or $\alpha_j\leqslant r$ for all $j\in\{1,\dots,m\}$. Suppose that $\alpha_j\geqslant r$ for some $j$. By (\ref{3.2}), $\sum_{i=0}^r(-1)^i\binom{r}{i}t^{(\alpha-re_j)+(r-i)e_j}\xi_I\partial\otimes t^{\beta+ie_j}\xi_J\partial'v\in X(V)$. So
$$t^\alpha\xi_I\partial\otimes t^\beta\xi_J\partial'v\in-\sum_{i=1}^r(-1)^i\binom{r}{i}t^{(\alpha-re_j)+(r-i)e_j}\xi_I\partial\otimes t^{\beta+ie_j}\xi_J\partial'v+X(V).$$
Note that $|(\alpha-re_j)+(r-i)e_j|<|\alpha|$ for all $i\geqslant 1$. By the induction hypothesis, the right hand side is contained in $B\otimes V+X(V)$. Thus $t^\alpha\xi_I\partial\otimes t^\beta\xi_J\partial'v\in B\otimes V+X(V)$.
\end{proof}

\begin{theorem}\label{main}
Let $V$ be a simple non-trivial bounded $W$-module. Then $V$ is a simple $W$-quotient of the $AW$-module $F(P,L(V_1\otimes V_2))$, where $P$ is a simple weight $\mathcal{K}_{m,n}$-module, $V_1$ is a finite-dimensional simple $\gl_m$-module and $V_2$ is a simple bounded $\gl_n$-module.
\end{theorem}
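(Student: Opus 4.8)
The plan is to realize $V$ as a $W$-quotient of a simple bounded $AW$-module and then feed that module into the classification of Section~3, the bridge being the $A$-cover. Since $V$ is non-trivial and simple, $W\cdot V=V$, so $\theta\colon W\otimes V\to V$ is surjective and hence the induced $W$-homomorphism $\hat\theta\colon\hat V\to V$ is surjective; by Theorem~\ref{hat}, $\hat V$ is a bounded $AW$-module, and as a quotient of the weight module $W\otimes V$ it is a weight $AW$-module. The decisive property of the $A$-cover, forced by the maximality of $X(V)$, is that $\ker\hat\theta$ contains no nonzero $AW$-submodule of $\hat V$: any such submodule would lift to an $AW$-submodule of $W\otimes V$ contained in $\ker\theta$ and strictly larger than $X(V)$, a contradiction.

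Next I would select a minimal $AW$-submodule of $\hat V$ still mapping onto $V$. Let $\mathcal P$ be the set of $AW$-submodules $N\subseteq\hat V$ with $\hat\theta(N)=V$, partially ordered by reverse inclusion; it contains $\hat V$. Here boundedness enters in an essential way: given a chain $\{N_i\}$ in $\mathcal P$, for each weight $(\lambda,\mu)$ the subspaces $(N_i)_{(\lambda,\mu)}$ of the finite-dimensional space $\hat V_{(\lambda,\mu)}$ form a totally ordered family and hence have a smallest member $(N_{i_0})_{(\lambda,\mu)}$, so $\bigl(\bigcap_i N_i\bigr)_{(\lambda,\mu)}=(N_{i_0})_{(\lambda,\mu)}$ maps onto $V_{(\lambda,\mu)}$; summing over weights gives $\hat\theta\bigl(\bigcap_i N_i\bigr)=V$, so $\bigcap_i N_i\in\mathcal P$. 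By Zorn's Lemma $\mathcal P$ has a minimal element $\tilde V$. Then $\tilde V$ is simple as an $AW$-module: a nonzero proper $AW$-submodule $N\subsetneq\tilde V$ would satisfy $\hat\theta(N)\subsetneq V$ by minimality, hence $\hat\theta(N)=0$ since $V$ is $W$-simple, so $N$ would be a nonzero $AW$-submodule of $\ker\hat\theta$, contradicting the first paragraph. Being an $AW$-submodule of the bounded module $\hat V$, $\tilde V$ is itself bounded.

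It remains to identify $\tilde V$. By Theorem~\ref{F(P,M)}, applicable since $\tilde V$ is a simple weight $AW$-module with finite-dimensional weight spaces, we have $\tilde V\cong F(P,M)$ for a simple weight $\mathcal K_{m,n}$-module $P$ and a simple weight $\gl(m,n)$-module $M$; since $F(P,M)$ is bounded, Lemma~\ref{cusF(P,M)} gives $M\cong L(V_1\otimes V_2)$ or $M\cong\Pi(L(V_1\otimes V_2))$ for a finite-dimensional simple $\gl_m$-module $V_1$ and a simple bounded $\gl_n$-module $V_2$, and in the second case we replace $P$ by $\Pi(P)$ using $F(P,\Pi(L(V_1\otimes V_2)))\cong F(\Pi(P),L(V_1\otimes V_2))$. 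Thus $\tilde V\cong F(P,L(V_1\otimes V_2))$ and $\hat\theta|_{\tilde V}\colon F(P,L(V_1\otimes V_2))\to V$ is a surjective homomorphism of $W$-modules with simple image, as required. The substantive input is entirely in Theorem~\ref{hat}, Theorem~\ref{F(P,M)} and Lemma~\ref{cusF(P,M)}; the only point inside this argument that needs care is the Zorn's Lemma step, where boundedness of $\hat V$ is exactly what keeps intersections of chains inside $\mathcal P$, and where the absence of nonzero $AW$-submodules in $\ker\hat\theta$ is what upgrades the minimal element from "surjecting onto $V$" to "simple".
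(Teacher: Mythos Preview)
Your proof is correct. The overall architecture matches the paper's: pass to the $A$-cover $\hat V$, extract from it a simple bounded $AW$-module that still surjects onto $V$, and then invoke Theorem~\ref{F(P,M)} and Lemma~\ref{cusF(P,M)}. The difference is in the extraction step. The paper fixes a single weight $(\lambda,\mu)$, works inside the finite-dimensional $\bar U_0$-module $\hat V_{(\lambda,\mu)}$ to choose a minimal $\bar U_0$-submodule $M$ with $\hat\theta(M)\neq 0$ and a maximal proper submodule $M'\subset M$, and then shows that $\bar UM/\bar UM'$ is the desired simple $AW$-module. You instead argue globally: you isolate the property that $\ker\hat\theta$ contains no nonzero $AW$-submodule (a direct consequence of the maximality built into the definition of $X(V)$, which the paper uses only implicitly), and then run Zorn's Lemma on the family of $AW$-submodules surjecting onto $V$, with boundedness guaranteeing that chains have lower bounds weight-by-weight. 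Your route makes the role of the $A$-cover more transparent and avoids introducing $\bar U_0$; the paper's route is more elementary in that it only ever manipulates a single finite-dimensional space and needs no Zorn. You are also slightly more careful than the paper about the parity-change ambiguity coming out of Lemma~\ref{cusF(P,M)}.
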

\begin{proof}
As defined above, $\hat\theta:\hat V\rightarrow V$ is a $W$-module homomorphism, so $\hat\theta$ maps any $AW$-submodule of $\hat V$ to $0$ or $V$ by the simplicity of $V$.
In particular, since $V$ is non-trivial, $\hat\theta(\hat V)=W\cdot V=V$. Let $(\lambda,\mu)\in\supp(V)$, then $(\lambda,\mu)\in\supp(\hat V)$ and $\hat\theta(\hat V_{(\lambda,\mu)})=V_{(\lambda,\mu)}$. By Theorem \ref{hat}, $\hat V$ is bounded. So $\hat V_{(\lambda,\mu)}$ is finite-dimensional.

Let $\bar U_0=\{x\in\bar U\ |\ [x,H]=0\}$. Then $\bar U_0$ is a subalgebra of $\bar U$ and $\hat V_{(\lambda,\mu)}$ is a $\bar U_0$-module. Let $M$ be a minimal $\bar U_0$-submodule of $\hat V_{(\lambda,\mu)}$ such that $\hat\theta(M)\neq 0$ and $M'$ be a maximal $\bar U_0$-submodule of $M$. Since $(\bar UM)_{(\lambda,\mu)}=M$ and $(\bar UM')_{(\lambda,\mu)}=M'$, we have $\hat\theta(\bar UM)=V$ and $\hat\theta(\bar UM')=0$. It follows that $V$ is a $W$-quotient of the simple bounded $AW$-module $\bar UM/\bar UM'$.

From Theorem \ref{F(P,M)} and Lemma \ref{cusF(P,M)}, $\bar UM/\bar UM'$ is isomorphic to $F(P,L(V_1\otimes V_2))$, where $P$ is a simple weight $\mathcal{K}_{m,n}$-module, $V_1$ is a finite-dimensional simple $\gl_m$-module and $V_2$ is a simple bounded $\gl_n$-module. So $V$ is a simple quotient of $F(P,L(V_1\otimes V_2))$.
\end{proof}

We remark that simple weight $\mathcal{K}_{m,n}$-modules and simple bounded weight $gl_n$ modules are known, see Lemma \ref{Kweight} in this paper and Theorem 13.3 in \cite{Ma1}. %And the simplicity of the tensor modules will be determined in the

{\bf Ackowledgement.} {This work is partially supported by NSF of China (Grant 11471233, 11771122, 11971440)}.

 \noindent R.L\"u: Department of Mathematics, Soochow University, Suzhou, P. R. China.  Email: rlu@suda.edu.cn

\vspace{0.2cm}\noindent Y. Xue.: Department of Mathematics, Soochow University, Suzhou, P. R. China.  Email: yhxue00@stu.suda.edu.cn, corresponding author

\end{document}